\newtheorem{defn}{Definition}[section]
\newtheorem{lemma}[defn]{Lemma}
\newtheorem{proposition}[defn]{Proposition}
\newtheorem{maintheorem}{Theorem}
\newtheorem{maincorollary}[maintheorem]{Corollary}
\newtheorem{mainexample}[maintheorem]{Example}
\theoremstyle{remark}
\newtheorem*{claim}{Claim}
\newcommand{\zz}{{\mathbb Z}}
\newcommand{\zm}{\zz/2\zz}
\newcommand{\rr}{{\mathbb R}}
\newcommand{\qq}{{\mathbb Q}}
\newcommand{\ozsvath}{Ozsv\'{a}th}
\newcommand{\szabo}{Szab\'{o}}
\newcommand{\saso}{Sa\v{s}o}
\newcommand{\spin}{\ifmmode{\rm Spin}\else{${\rm spin}$\ }\fi}
\newcommand{\spinc}{\ifmmode{{\rm Spin}^c}\else{${\rm spin}^c$\ }\fi}
\newcommand{\spinct}{\mathfrak t}
\newcommand{\spincs}{\mathfrak s}
\newcommand{\tors}{{\it Tors}}
\newcommand{\CP}{\mathbb{CP}}
\newcommand{\RP}{\mathbb{RP}}
\DeclareMathOperator{\lk}{lk}
\DeclareMathOperator{\Char}{Char}
\newcommand{\TwoF}[1]{\Sigma_2(#1)}
\newcommand{\dbc}[1]{\Sigma_2(#1)}
\begin{document}

\title{Unlinking information from 4-manifolds}
\author{Matthias Nagel}
\address{Fachbereich Mathematik\newline\indent
University of Regensburg\newline\indent
93053 Regensburg, Germany}
\email{matthias.nagel@mathematik.uni-regensburg.de}
\author{Brendan Owens}
\address{School of Mathematics and Statistics \newline\indent 
University of Glasgow \newline\indent 
Glasgow, G12 8QW, United Kingdom}
\email{brendan.owens@glasgow.ac.uk}
\thanks{The first author was supported by Deutsche Forschungsgemeinschaft through
the SFB 1085 at the University of Regensburg.}
\subjclass[2010]{57M25, 57M27}
\date{\today}

\dedicatory{Dedicated to Tim Cochran.}

\begin{abstract}
We generalise theorems of Cochran-Lickorish and Owens-Strle to the case of
links with more than one component.  This enables the use of linking forms on
double branched covers, Heegaard Floer correction terms, and Donaldson's
diagonalisation theorem to complete the table of unlinking numbers for nonsplit prime
links with crossing number nine or less.
\end{abstract}

\maketitle

%%%%%%%%%%%%%%%%%%%%%%%%%%%%%%%%%%%%%%%%%%%%%%%%%%%%%%%%%%%%%%%%%%%%%%%%%%

\section{Introduction}
\label{sec:intro}

Let $L$ be a link in $S^3$.  The unlinking number of $L$ is the minimum number
of crossing changes to convert a diagram of $L$ to a diagram of the unlink,
where the minimum is taken over all diagrams.  This is the obvious
generalisation to links of the unknotting number, and one should expect that
many of the same methods should apply to compute it for examples.  
There have been several successful applications of Donaldson's diagonalisation theorem and
Heegaard Floer homology to the calculation of unknotting numbers, for example \cite{cl,osu1,unknotting,Gu1,altu1}.  It is an interesting problem,
which we begin to address here, to generalise these techniques to the case of links.
The
systematic study of unlinking number for links with more than one component
seems to have been initiated by Kohn in \cite{kohn}, in which he computed
unlinking numbers for all but 5 prime, nonsplit, 2-component links which admit
diagrams with 9 crossings or less.  In this paper we determine the unlinking
number for these remaining examples and provide a complete table of unlinking
numbers for prime nonsplit links with crossing number at most 9.

The main result of this paper is a generalisation of a theorem of Cochran and
Lickorish \cite{cl}, and of a refinement due to the second author and Strle
\cite{imm}, to the case of links with more than one component.  We choose an
orientation on a given link and consider the sum $\sigma+\eta$ of the classical
link signature and nullity.  This sum is equal to $k-1$ for the $k$-component
unlink; it increases by 2 or stays constant when a positive crossing is
changed, and decreases by 0 or 2 when a negative crossing is changed.  Thus if
$\sigma+\eta$ is less than $k-1$ for a given orientation on a $k$-component
link $L$, then any set of crossing changes converting $L$ to the unlink must
include changing at least $(-\sigma-\eta+k-1)/2$ positive crossings.  We show
that if the number of positive crossings changed does not exceed this minimum
then the double branched cover $Y$ of $L$ bounds a smooth definite 4-manifold
$W$ which constrains the linking form of $Y$, and which leads to
obstructions coming from Donaldson's diagonalisation theorem and Heegaard Floer
theory.  Note that the trace of a homotopy associated to a sequence of crossing
changes from $L$ to the the unlink gives rise to an immersion of $k$ disks in
the 4-ball bounded by $L$, with one double point for each crossing change.

\begin{maintheorem}
\label{thm:mainthm} Let $L$ be an oriented $k$-component link in $S^3$ with
signature $\sigma$ and nullity $\eta$, and suppose that $L$ is the oriented
boundary of a properly normally immersed union of $k$ disks in the 4-ball
with $p$ positive double points and $n$ negative double points. Then
\begin{equation}
\label{eq:pineq}
p\ge\frac{-\sigma-\eta+k-1}2.
\end{equation}
If equality holds in \eqref{eq:pineq}
then the double cover $Y$ of $S^3$ branched along $L$ bounds a smooth negative-definite
4-manifold $W$ with
$$b_2(W)=2n-\sigma=2(p+n)+\eta-k+1,$$
for which the restriction map
$$H^1(W;\qq)\to H^1(Y;\qq)$$
is an isomorphism.  
Moreover $H_2(W;\zz)$ contains $p+n$ pairwise disjoint spherical classes of self-intersection $-2$, whose images in $H_2(W;\zz)/\tors$ span a primitive sublattice.
\end{maintheorem}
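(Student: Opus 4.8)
Here is the plan I would follow; it is an adaptation of the constructions of Cochran--Lickorish \cite{cl} and Owens--Strle \cite{imm} to the multicomponent case, the genuinely new features being the appearance of the nullity $\eta$ in place of the signature alone, and the bookkeeping of $k$ and of the pairwise linking numbers of $L$.

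First I would build the $4$-manifold. Let $\Delta\subset B^4$ be the immersed union of disks, with transverse double points $x_1,\dots,x_{p+n}$, and let $\overline W=\Sigma_2(B^4,\Delta)$ be the double cover of $B^4$ branched over $\Delta$. This is a $4$-dimensional pseudomanifold, smooth away from the $x_i$, with $\partial\overline W=\Sigma_2(S^3,L)=Y$, and with a single cone singularity modelled on the cone on $\RP^3$ at each $x_i$ --- because the double branched cover of $(\cc^2,\{z_1z_2=0\})$ has an $A_1$ singularity whose link is $\RP^3$, regardless of the sign of the double point. Let $W$ be obtained from $\overline W$ by excising an open cone neighbourhood of each $x_i$ and gluing in a copy of the disk bundle $E_{-2}$ over $S^2$ of Euler number $-2$ (so $\partial E_{-2}=\RP^3$); this can be done for double points of either sign because $\RP^3=L(2,1)$ admits an orientation-reversing self-diffeomorphism, so the $\RP^3$ boundary component left by the excision can always be matched with $\partial E_{-2}$. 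Then $W$ is a smooth compact oriented $4$-manifold with $\partial W=Y$, containing $p+n$ pairwise disjoint embedded $2$-spheres $S_1,\dots,S_{p+n}$, namely the zero-sections of the glued-in bundles, each with $[S_i]^2=-2$.

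Next I would compute $\chi(W)$ and $\sigma(W)$. Riemann--Hurwitz gives $\chi(\overline W)=2\chi(B^4)-\chi(\Delta)=2-(k-(p+n))$, and swapping $p+n$ cones (each of Euler characteristic $1$) for $p+n$ copies of $E_{-2}\simeq S^2$ (each of Euler characteristic $2$) across $\RP^3$'s yields $\chi(W)=2-k+2(p+n)$. For the signature I would compare $W$ with $W':=\Sigma_2(B^4,F)$, where $F$ is the embedded surface obtained from $\Delta$ by the oriented resolution of every double point (replacing the two local sheets by an embedded annulus): over each positive double point $W'$ also carries an $E_{-2}$, whereas over each negative double point it carries $E_{+2}=\overline{E_{-2}}$, so by Novikov additivity $\sigma(W)=\sigma(W')-2n$. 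Finally $\sigma(W')=\sigma(L)-\tfrac12 e(F)$ by the standard signature formula for double branched covers of $B^4$ along properly embedded surfaces, and $e(F)=0$: the self-intersection of $\Delta$ relative to the Seifert framing is $\sum_{a<b}2\lk(L_a,L_b)+2\sum_a(\text{signed self-double points of }\Delta_a)=2(p-n)$ (the linking terms cancel against the same contribution to the signed count $p-n$), while each oriented resolution of an $\varepsilon$-double point changes this number by $-2\varepsilon$. Hence $\sigma(W)=\sigma-2n$.

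The key remaining input is the isomorphism $H^1(W;\qq)\to H^1(Y;\qq)$, equivalently that $i_*\colon H_1(Y;\qq)\to H_1(W;\qq)$ is an isomorphism. Surjectivity comes from the fact that, reading $\Delta$ as a movie of $L$ collapsing to nothing, $W$ is built from $Y\times[0,1]$ by attaching only handles of index at least $2$ (the $\Delta_a$ have genus $0$, so there are no births or saddles; each double point contributes a cone on $\RP^3$, i.e.\ an $E_{-2}$, i.e.\ a $2$-handle; each component finally caps off with a $2$- and a $3$-handle), so $H_1(W;\qq)$ is a quotient of $H_1(Y;\qq)$. For injectivity I would identify $H_1(W;\qq)$ with the $\qq$-twisted first homology of $B^4\setminus\Delta$ and $H_1(Y;\qq)$ with that of $S^3\setminus L$ (whose dimension is, by definition, the nullity $\eta$), and argue that the inclusion induces an isomorphism; this is the delicate point (see below). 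Granting it, $b_1(W)=\eta$, and $b_3(W)=\dim\coker(i_*)=0$, so $b_2(W)=\chi(W)-1+b_1(W)=2(p+n)+\eta-k+1$; moreover the intersection form of $W$ is nondegenerate over $\qq$ (since $H^1(W;\qq)\to H^1(Y;\qq)$ is onto), so $b_2^+(W)=\tfrac12(b_2(W)+\sigma(W))=p-\tfrac12(-\sigma-\eta+k-1)$. As $b_2^+(W)\ge 0$ this is precisely \eqref{eq:pineq}, and equality there is equivalent to $b_2^+(W)=0$, i.e.\ to $W$ being negative definite, in which case $b_2(W)=2n-\sigma$ as claimed.

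Finally, primitivity of the sublattice spanned by the $[S_i]$. Put $M=W\setminus\bigcup_j\nu(S_j)$, the complement in $\overline W$ of the cone neighbourhoods, with $\partial M=Y\sqcup\bigsqcup_j\RP^3$. From the long exact sequence of $(W,M)$ together with $H_2(E_{-2},\partial E_{-2})\cong\zz$ --- in which $[S_i]$ is $\pm2$ times a generator $\delta_i$, the class of a fibre disk, because the fibre meets $S_i$ once and $[S_i]^2=-2$ --- one finds that the image of $H_2(W;\zz)\to H_2(W,M;\zz)\cong\bigoplus_i\zz\delta_i$ is exactly $\bigoplus_i 2\zz\delta_i$; this uses that the classes $\partial\delta_i\in H_1(M;\zz)$, each of order $2$, are linearly independent over $\zm$, which holds because the $\RP^3$ ends arise from disjoint neighbourhoods of the $x_i$. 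Pairing an arbitrary relation $\sum a_i[S_i]=mv+(\text{torsion})$ in $H_2(W;\zz)$ with this map forces $m\mid a_i$ for all $i$, hence $v\in\langle[S_1],\dots,[S_{p+n}]\rangle+\mathrm{Tors}$; so this sublattice, manifestly isometric to $\langle-2\rangle^{\oplus(p+n)}$, is primitive in $H_2(W;\zz)/\mathrm{Tors}$. I expect the hard part to be exactly the isomorphism $H^1(W;\qq)\cong H^1(Y;\qq)$: showing that replacing $L$ by its immersed disks does not alter the rational first Betti number of the double branched cover beyond what the nullity prescribes. In the knot case $\eta=0$ and both $Y$ and $W$ are rational homology balls, which makes this step automatic; here the double points obstruct a naive handle-by-handle count and one must work with the twisted homology of the complements directly.
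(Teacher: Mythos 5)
Your construction of $W$ (resolve the cone-on-$\RP^3$ singularities of $\Sigma_2(D^4,\Delta)$ by gluing in Euler-number $-2$ disk bundles) is equivalent to the paper's (blow up in the base and take the branched cover of $D^4\#(p+n)\overline{\CP^2}$ over the proper transform), and your computations of $\chi(W)$ and $\sigma(W)$ come out correctly, the latter by a legitimate alternative to the paper's $G$-signature argument. But the step you yourself flag as ``the delicate point'' is precisely the heart of the theorem, and your proposed treatment of it does not work. What is actually needed is surjectivity of $H_1(Y;\qq)\to H_1(W;\qq)$ (equivalently, injectivity of the restriction $H^1(W;\qq)\to H^1(Y;\qq)$): this simultaneously gives $b_3(W)=0$ and $b_1(W)\le\eta$, hence $b_2(W)\le 2(p+n)+1-k+\eta$, and then \eqref{eq:pineq} follows from $b_2(W)\ge -\sigma(W)=2n-\sigma$; in the equality case the form is definite, hence nondegenerate, and the long exact sequence of $(W,\partial W)$ upgrades the injection to the claimed isomorphism for free. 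You do not need $b_1(W)=\eta$ unconditionally, and the paper never proves it. Your argument for the surjectivity, however, rests on the claim that the disks, having genus zero, admit a radial Morse function with no births or saddles, so that $W$ is built from $Y\times I$ using only handles of index $\ge 2$. That is false: a properly embedded or immersed disk in $D^4$ can have interior local maxima of the radial function (this is exactly the ribbon condition, which cannot be assumed), and each such birth contributes a $1$-handle that your argument cannot absorb; your handle count also does not reproduce $\chi(W)=2(p+n)-k+2$. The paper instead proves $H^1(W,\partial W;\zm)=0$ via Lee's exact sequence for branched covers applied to $(C,\partial C\cup\Delta)$, deduces $H_3(W;\zz)=0$, and reads off the injection and the identification of its cokernel with the nullspace of the intersection form from the long exact sequence of the pair; some such global input about the cohomology of branched covers is unavoidable here.

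A secondary gap: your primitivity argument needs the image of $H_2(W;\zz)\to H_2(W,M;\zz)\cong\bigoplus_i\zz\delta_i$ to be $\bigoplus_i 2\zz\delta_i$, which by exactness requires the order-two classes $[\partial\delta_i]\in H_1(M;\zz)$ to be independent over $\zm$. Disjointness of the neighbourhoods of the $x_i$ does not give this: each $[\partial\delta_i]$ is represented by a branch curve near $x_i$, and when several double points lie on the same disk component these classes can satisfy relations in $H_1(M)$ coming from the punctured disk they all live on. The paper's route avoids this entirely by pushing the spherical classes forward to $H_2(C;\zz)$, where they pair against the pairwise orthogonal exceptional classes; I would adopt that argument, or else supply a genuine proof of the independence you assert.
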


We denote by $c^*(L)$ the \emph{4-ball crossing number} of a link $L$ in $S^3$.  This is the minimal number of double points in a properly normally immersed union of disks in $D^4$ bounded by the link.  The conditions imposed by
Theorem \ref{thm:mainthm} on the linking form of $Y$ are sufficient to determine the unlinking number and 4-ball crossing number of four of Kohn's remaining examples.  Recall that a 2-component link in $S^3$ has two Murasugi signatures corresponding to two choices of \emph{quasi-orientation} (orientation up to overall reversal).

\begin{mainexample}
\label{eg:kohn}
Let $L$ be a 2-component link in $S^3$ with $c^*(L)<3$.  Suppose that the double branched cover $Y$ has finite cyclic homology group.
Then one of the statements below holds:
\begin{enumerate}
\item $\det(L)=2t^2$ with $t\in\zz$, or
\item $\det (L)$ is divisible by $4$ and  at least one of the signatures of $L$ is in $\{-1,1\}$, or
\item $\det (L)$ is divisible by $16$.
\end{enumerate}
In particular, the links 
$$9^2_3=L9a30,\, 9^2_{15}=L9a15,\, 9^2_{27}=L9a17,\, 9^2_{31}=L9a2$$ 
have unlinking number and 4-ball crossing number $3$.
\end{mainexample}

We refer in Example \ref{eg:kohn} to links using both Rolfsen and Thistlethwaite names.
A proof that the unlinking number of $L9a30$ is 3 was given by Kanenobu and Kanenobu in \cite{KK}.

The following corollary is obtained by combining Theorem \ref{thm:mainthm} with Donaldson's diagonalisation theorem \cite{don} and a construction of Gordon and Litherland \cite{gl}. 

\begin{maincorollary}
\label{cor:alt}
Let $L$ be an oriented nonsplit alternating link with $k$ components and signature $\sigma$, and suppose that $L$ bounds a properly normally immersed union of $k$ disks in the 4-ball with $p=\frac{-\sigma+k-1}2$ positive double points and $n$ negative double points.
Let $m$ be the rank of the positive-definite Goeritz lattice $\Lambda$ associated to an alternating diagram of $L$.  Then $\Lambda$ admits an embedding in the standard integer lattice $Z=\zz^{m+2n-\sigma}=\zz^{m+2(p+n)-k+1}$, with $p+n$ pairwise orthogonal vectors of square $2$ which span a primitive sublattice in the orthogonal complement of $\Lambda$ in $Z$.
\end{maincorollary}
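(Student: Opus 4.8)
The plan is to realise the double branched cover $Y=\TwoF{L}$ as the common boundary of two negative-definite $4$-manifolds and then apply Donaldson's theorem to the closed manifold obtained by gluing them. Since $L$ is nonsplit and alternating, $\det L\neq 0$, so $Y$ is a rational homology sphere and the nullity $\eta$ of $L$ vanishes; hence the hypothesis $p=\frac{-\sigma+k-1}2$ is precisely the equality case of \eqref{eq:pineq}, and Theorem \ref{thm:mainthm} supplies a smooth negative-definite $4$-manifold $W$ with $\partial W=Y$, $b_1(W)=0$, $b_2(W)=2n-\sigma$, and $p+n$ pairwise disjoint spherical classes $e_1,\dots,e_{p+n}\in H_2(W;\zz)$ of self-intersection $-2$ whose images span a primitive sublattice of $H_2(W;\zz)/\tors$. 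On the other side, apply the Gordon-Litherland construction \cite{gl} to the checkerboard surface of an alternating diagram realising the positive-definite Goeritz lattice $\Lambda$: this produces a simply connected $4$-manifold $X$ with $\partial X=Y$ (oriented compatibly with $W$) and $(H_2(X;\zz),Q_X)\cong(\zz^m,\Lambda)$.

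Next I would form the closed oriented $4$-manifold $Z'=\overline X\cup_Y W$. It is negative definite, since $W$ is and $\overline X$ carries $-\Lambda$. Because $Y$ is a rational homology sphere the Mayer--Vietoris sequence is especially simple: $H_2(\overline X;\zz)\oplus H_2(W;\zz)$ injects into $H_2(Z';\zz)$, the images are orthogonal (cycles can be pushed into the respective pieces), $b_1(Z')=0$, and $b_2(Z')=m+b_2(W)=m+2n-\sigma$. Donaldson's diagonalisation theorem \cite{don} then identifies $\bigl(H_2(Z';\zz)/\tors,\,-Q_{Z'}\bigr)$ with the standard lattice $Z=\zz^{m+2n-\sigma}=\zz^{m+2(p+n)-k+1}$, the last equality being the hypothesis on $p$. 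Under this identification the inclusion of $H_2(X;\zz)$ is an isometric embedding, giving the desired copy of $\Lambda$ in $Z$; and $e_1,\dots,e_{p+n}$ become pairwise orthogonal vectors of square $2$ lying in the orthogonal complement $\Lambda^\perp$ of the image of $H_2(X)$, since the images of $H_2(X)$ and $H_2(W)$ in $Z'$ are orthogonal.

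It remains to see that $e_1,\dots,e_{p+n}$ span a \emph{primitive} sublattice of $\Lambda^\perp$. As $\Lambda$ is nondegenerate, $\Lambda^\perp$ equals the saturation in $Z$ of the image of $H_2(W;\zz)$, so it suffices to show that this image is already saturated; granting that, primitivity in $\Lambda^\perp$ follows immediately from primitivity in $H_2(W;\zz)/\tors$. To prove saturation I would first check, using the branched-cover construction behind Theorem \ref{thm:mainthm}, that $H_1(W;\zz)$ is torsion-free and hence trivial. Then, $X$ being simply connected as well, the discriminant forms of $Q_X=\Lambda$ and of $Q_W$ are both, up to sign, the linking form of $Y$; the closed manifold $Z'$ corresponds to the diagonal gluing of these discriminant forms, and a standard lattice argument then shows that the image of $H_2(W;\zz)$ is saturated in $H_2(Z';\zz)/\tors$.

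I expect this last point -- controlling $H_1(W;\zz)$ -- to be the main obstacle, since Theorem \ref{thm:mainthm} as stated guarantees only $b_1(W)=0$. One must either verify directly that the branched double cover produces a $W$ with $H_1(W;\zz)=0$, or, if torsion in $H_1(W;\zz)$ is present, bound its contribution to the index $[\Lambda^\perp:\im H_2(W;\zz)]$ and check that it cannot disturb the primitivity of $\langle e_1,\dots,e_{p+n}\rangle$. Everything else -- the isometric embedding of $\Lambda$, the squares and mutual orthogonality of the $e_i$, and the Betti-number bookkeeping -- is routine once the closed manifold $Z'$ is in hand.
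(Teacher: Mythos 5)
Your construction is the same as the paper's: take $W$ from Theorem \ref{thm:mainthm}, take the simply connected positive-definite Gordon--Litherland manifold $X$ with intersection lattice $\Lambda$, glue along $Y=\TwoF{L}$, apply Donaldson, and read off the orthogonal images of $\Lambda$ and of the classes $e_1,\dots,e_{p+n}$ via Mayer--Vietoris. Up to and including that point your argument matches the paper's proof step for step (the paper derives $\eta=0$ from definiteness of the two Goeritz forms rather than from $\det L\neq 0$, but that is cosmetic).

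The one place you diverge is the final primitivity step, and there you have correctly isolated the issue --- primitivity of $\langle e_1,\dots,e_{p+n}\rangle$ in $H_2(W;\zz)/\tors$ does not formally imply primitivity in $Z$ unless the image of $H_2(W;\zz)$ is all of $\Lambda^{\perp}$ --- but you are looking for the fix in the wrong place. Controlling $H_1(W;\zz)$ is neither necessary nor obviously sufficient: even with $H_1(X;\zz)=H_1(W;\zz)=0$, Mayer--Vietoris forces $\im H_2(X)\oplus\im H_2(W)$ to have index $|H_1(Y;\zz)|=\det L>1$ in $H_2(\widetilde X;\zz)$, so there is always nontrivial glue and the question is only how it is distributed. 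What actually saves you is simple connectivity of $X$. Consider the composite
$$H_2(\widetilde X;\zz)\longto H_2(\widetilde X,W;\zz)\cong H_2(X,\partial X;\zz)\cong H^2(X;\zz)\cong\Hom(H_2(X;\zz),\zz),$$
where the last two isomorphisms use excision, Poincar\'{e}--Lefschetz duality, and the fact that $H^2(X;\zz)$ is torsion-free because $H_1(X;\zz)=0$. This composite sends $v$ to the functional $x\mapsto v\cdot x$ on $H_2(X;\zz)$, and its kernel is exactly $\im\bigl(H_2(W;\zz)\to H_2(\widetilde X;\zz)\bigr)$. Hence any class orthogonal to all of $\im H_2(X;\zz)$ already lies in $\im H_2(W;\zz)$, i.e.\ $\Lambda^{\perp}=\im\bigl(H_2(W;\zz)/\tors\bigr)$, which is the saturation statement you wanted; primitivity then transfers exactly as you say. (Alternatively one can bypass saturation altogether: $\widetilde X$ is itself the double cover of $S^4\,\#\,(p+n)\CP^2$ branched along the closed surface obtained by capping $\Delta$ with the pushed-in chessboard surface, so the divisibility argument of Proposition \ref{prop:cl}(4) --- pushing forward to the base and pairing with the exceptional classes --- applies verbatim to any class of $H_2(\widetilde X;\zz)$ and gives primitivity in $Z$ directly.) With either of these in place of your last paragraph, your proof is complete and agrees with the paper's.
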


The final unknown example from Kohn's table, as well as the links in Example \ref{eg:kohn}, may be settled using Corollary \ref{cor:alt}.   An alternative method involves the use of Heegaard Floer correction terms of the double branched cover.  We illustrate the use of both methods in Section \ref{sec:eg}.

\begin{mainexample}
\label{eg:L9a10}
The link $9^2_{36}=L9a10$ has unlinking number and 4-ball crossing number $3$.
\end{mainexample}

Combining Examples \ref{eg:kohn} and \ref{eg:L9a10} with previous results of Kauffman-Taylor \cite{kt}, Kohn \cite{kohn}, Kawauchi \cite{kaw14} and Borodzik-Friedl-Powell \cite{bfp} leads to the complete Table \ref{table:u} of unlinking numbers of nonsplit prime links with crossing number at most 9.  We also show that for all  links in Table \ref{table:u}, the unlinking number is equal to the 4-ball crossing number.  

\vskip2mm \noindent{\bf Acknowledgements.}
We are grateful to Maciej Borodzik, Stefan Friedl and Mark Powell for bringing the unknown values in Kohn's table to our attention in their paper \cite{bfp}. 
The first author thanks the University of Glasgow for its hospitality. 
The second author acknowledges the influence of his earlier joint work with \saso\ Strle, especially \cite{imm}.
We thank Mark Powell for helpful comments on an earlier draft, and the anonymous referee for helpful suggestions.

%%%%%%%%%%%%%%%%%%%%%%%%%%%%%%%%%%%%%%%%%%%%%%%%%%%%%%%%%%%%%%%%%%%%%%%%%%

\section{Proofs of Theorem \ref{thm:mainthm} and Corollary \ref{cor:alt}}

For $L$ a link in $S^3$ we denote the two-fold 
branched cover of $L$ by $\TwoF{L}$.  Let $F$ be a connected
 Seifert surface for $L$.
Following Trotter and Murasugi \cite{trotter,murasugi},
the \emph{signature} $\sigma(L)$ of the link $L$ is the signature of the symmetrised Seifert pairing
\begin{align*}
H_1(F) \times H_1(F) &\rightarrow \zz\\
a, b &\mapsto \lk(a_+,b) + \lk(a, b_+),
\end{align*}
where $\lk(a,b)$ denotes the linking number of $a$ and $b$ in $S^3$ and
$b_+$ denotes the cycle obtained by pushing $b$ in the positive normal direction.
The \emph{nullity} $\eta(L)$ of the link $L$ is the dimension of the nullspace of the symmetric form above.  This is also equal to the first Betti number of $\dbc{L}$ \cite{kt}.  We note that if $-L$ denotes the mirror of $L$ then $\sigma(-L)=-\sigma(L)$ and $\eta(-L)=\eta(L)$.

The following lemma is based on \cite[Proposition 2.1]{cl} and \cite[Theorem 5.1]{st}.
\begin{lemma}
Let $L$ be a link in $S^3$.
\label{lem:crossingchange}
Suppose that $L$ and $L'$ are oriented links in $S^3$ such that $L'$ is obtained from $L$ by changing a single negative crossing.  Then
$$\sigma(L')\pm\eta(L')\in\{\sigma(L)\pm\eta(L),\sigma(L)\pm\eta(L)-2\},$$
where the choice of $\pm$ is consistent in all three instances above.
Similarly if $L'$ is obtained from $L$ by changing a single positive crossing, then
$$\sigma(L')\pm\eta(L')\in\{\sigma(L)\pm\eta(L),\sigma(L)\pm\eta(L)+2\}.$$
In either case,
$$|\eta(L')-\eta(L)|\le1.$$
\end{lemma}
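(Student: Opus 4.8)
The plan is to compare the symmetrised Seifert forms of $L$ and $L'$ directly, by realising the crossing change as the alteration of a single diagonal entry of a Seifert matrix, and then to read all three assertions off from eigenvalue interlacing. First I would reduce to a negative crossing change: if $L'$ comes from $L$ by switching a positive crossing, then the mirror $-L'$ comes from $-L$ by switching a negative crossing, and since $\sigma(-L)=-\sigma(L)$ and $\eta(-L)=\eta(L)$, applying the negative-crossing case to $-L$ and $-L'$ and multiplying through by $-1$ yields precisely the positive-crossing statement for $L$ and $L'$, with the choice of sign remaining consistent in all three places. So assume the switched crossing $c$ of $L$ is negative and $L'$ is the result of making it positive.

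Next I would set up Seifert surfaces. Pick a diagram $D$ of $L$ displaying $c$, let $D'$ be $D$ with $c$ switched, and let $D_0$ be the oriented resolution of $c$, a diagram of a link $L_0$. Seifert's algorithm assigns the same Seifert circles to $D$, $D'$ and $D_0$, since these are already the oriented resolutions of all the crossings; hence the associated surfaces satisfy $F=F_0\cup b$ and $F'=F_0\cup b'$, where $F_0$ is a Seifert surface for $L_0$ and the bands $b,b'$ are attached along the same pair of arcs near $c$ and differ by one full twist. After stabilising $F_0$ away from $c$ we may assume $F_0$ is connected, so that $b$ is non-separating in $F$ and $H_1(F)=H_1(F_0)\oplus\zz\langle z\rangle$, where $z$ is the core of $b$ completed by an arc in $F_0$; likewise $H_1(F')=H_1(F_0)\oplus\zz\langle z'\rangle$. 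In bases adapted to these decompositions the Seifert matrices of $F$ and $F'$ agree except in the single diagonal entry indexed by $z$, which drops by $1$ when $c$ is switched from negative to positive: linking numbers of $z$ with cycles carried by $F_0$ are insensitive to a local twist in the band, while the self-linking of $z$ records exactly the framing change of the band. This is the content of \cite[Proposition 2.1]{cl} (see also \cite[Theorem 5.1]{st}). Passing to symmetrised forms, $\Theta'=\Theta-2\Pi$, where $\Pi$ is the rank-one positive semidefinite form with single nonzero entry $1$ in position $(z,z)$.

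Finally I would conclude with linear algebra. List the eigenvalues of $\Theta$ and $\Theta'$ in weakly decreasing order; since $\Theta'=\Theta-2\Pi$ with $\Pi$ of rank one and positive semidefinite, Cauchy interlacing gives $\lambda_i(\Theta)\ge\lambda_i(\Theta')\ge\lambda_{i+1}(\Theta)$ for all $i$. Writing $r(\cdot)$ and $s(\cdot)$ for the numbers of strictly positive and of non-negative eigenvalues, these inequalities force $r(\Theta')\in\{r(\Theta)-1,r(\Theta)\}$ and $s(\Theta')\in\{s(\Theta)-1,s(\Theta)\}$. For an $N\times N$ symmetric form one has $\sigma-\eta=2r-N$ and $\sigma+\eta=2s-N$, so for each fixed choice of sign $\sigma(L')\pm\eta(L')\in\{\sigma(L)\pm\eta(L),\sigma(L)\pm\eta(L)-2\}$, which is the asserted constraint for a negative crossing change; the positive case then follows from the reduction above with $-2$ replaced by $+2$. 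For the nullity, if $\Theta x=0$ and the $z$-coordinate of $x$ vanishes then $\Pi x=0$, hence $\Theta'x=0$, so $\eta(L')\ge\eta(L)-1$; applying the same argument to $\Theta=\Theta'+2\Pi$ gives $\eta(L)\ge\eta(L')-1$, whence $|\eta(L')-\eta(L)|\le1$.

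The main obstacle is the middle step, the reduction of a crossing change to a single entry of a Seifert matrix. One has to verify that Seifert's algorithm really exhibits $F$ as $F_0$ with exactly one extra band, that stabilising $F_0$ makes that band non-separating so that a new homology class $z$ genuinely appears, and that the perturbation carries the correct sign relative to the Trotter--Murasugi convention. All of this is precisely \cite[Proposition 2.1]{cl} and \cite[Theorem 5.1]{st}, so in the write-up I would quote those statements rather than reprove the bookkeeping.
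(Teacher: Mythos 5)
Your proposal is correct and follows essentially the same route as the paper: both realise the crossing change as a reduction by $2$ of a single diagonal entry of the symmetrised Seifert form (via a basis of $H_1(F)$ with exactly one loop through the distinguished band) and then deduce the constraints on $\sigma\pm\eta$ from the effect of this rank-one perturbation, obtaining the positive-crossing case by passing to mirrors. Your Cauchy-interlacing formulation, together with the identities $\sigma\pm\eta=2s-N,\,2r-N$, is just a more explicit version of the paper's ``last eigenvalue'' argument, and your kernel argument for $|\eta(L')-\eta(L)|\le 1$ matches the paper's restriction to the nullity.
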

\begin{proof}
We take a diagram for $L$ containing a negative crossing, such that changing
this crossing yields $L'$.  The Seifert algorithm, followed by tubing together components if necessary, gives a connected surface $F$; we may
choose a basis for $H_1(F)$ represented by loops, exactly one of which passes
through the distinguished  crossing.  We see that the matrix of the symmetrised
Seifert pairing in this basis is the same for $L'$ as for $L$ save for one
diagonal entry which is reduced by 2.  This will leave the signature and
nullity of the pairing on a codimension one sublattice unchanged.  The last
eigenvalue may also be unchanged in sign, or may change from positive to
negative or zero, or from zero to negative.  The first assertion of the lemma
follows; the second follows on applying the first to $-L$ and $-L'$, and the
third follows easily by restricting consideration to the nullity.
\end{proof}
A surface with boundary is normally properly immersed in $D^4$ if its boundary (respectively, interior) is contained in the boundary (resp., interior) of the 4-ball and its only singularities are normal double points, locally modelled on two transversely intersecting planes in $\rr^4$.  Double points may be given a sign by taking an arbitrary orientation on the surface and comparing an oriented basis for the tangent plane to one sheet of the surface at the singularity followed by an oriented basis on the other sheet, to the ambient orientation.
\begin{lemma}
\label{lem:ineq}
Let $L$ be an oriented $k$-component link in $S^3$ with  signature $\sigma$ and  nullity $\eta$, and suppose that $L$ bounds a properly normally immersed union of 
$k$ disks with $p$ positive double points and $n$ negative double points.
Then
$$n\ge\frac{\sigma-\eta+k-1}2 \quad\mbox{ and }\quad p\ge\frac{-\sigma-\eta+k-1}2.$$
\end{lemma}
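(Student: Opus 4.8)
The plan is to reduce the immersed-disk statement to a statement about sequences of crossing changes, and then apply Lemma \ref{lem:crossingchange} inductively. The key observation is that a properly normally immersed union of $k$ disks in $D^4$ bounded by $L$, with $p$ positive and $n$ negative double points, can be converted into an \emph{embedded} union of $k$ disks by a regular homotopy that removes the double points one at a time; the trace of such a homotopy realises each double-point removal as a crossing change in a diagram, with the sign of the crossing changed matching the sign of the double point. Concretely, I would push the double points out to disjoint small balls and observe that resolving a positive (resp.\ negative) double point of the immersed disk corresponds to changing a positive (resp.\ negative) crossing in a suitable diagram of a link in a nearby level set. Running this in reverse: starting from the $k$-component unlink $U$ (which bounds $k$ disjoint embedded disks), there is a sequence of $p$ positive and $n$ negative crossing changes producing $L$. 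This is the standard dictionary between immersed disks in $D^4$ and unlinking sequences, used already in \cite{cl}.

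Next I would track $\sigma + \eta$ (with one fixed choice of sign, say $+$) along this sequence of $p+n$ crossing changes. For the $k$-component unlink we have $\sigma(U) + \eta(U) = k-1$: indeed the unlink bounds a disjoint union of disks, whose symmetrised Seifert form is trivial, so $\sigma(U)=0$ and $\eta(U) = b_1(\Sigma_2(U)) = k-1$. By Lemma \ref{lem:crossingchange}, each positive crossing change increases $\sigma+\eta$ by $0$ or $2$, and each negative crossing change decreases it by $0$ or $2$. Therefore, going from $U$ to $L$,
$$
\sigma(L) + \eta(L) \ \ge\ (k-1) + 0\cdot p \ - 2n \ =\ k - 1 - 2n,
$$
which rearranges to $n \ge \tfrac{\sigma - \eta + k - 1}{2}$. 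Applying the same argument to the mirror link $-L$, which bounds an immersed union of $k$ disks with the roles of positive and negative double points interchanged (so $p$ negative and $n$ positive double points), and using $\sigma(-L) = -\sigma(L)$, $\eta(-L) = \eta(L)$, gives $p \ge \tfrac{-\sigma - \eta + k - 1}{2}$. This also recovers the inequality \eqref{eq:pineq} of Theorem \ref{thm:mainthm}.

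The main obstacle I anticipate is making precise the claim that resolving the double points of the immersed disks corresponds, level-set by level-set, to \emph{crossing changes} (as opposed to more general cobordism moves), with the correct bookkeeping of signs and of the ambient link type between consecutive moves. One must verify that the immersed disks can be isotoped so that their singularities occur at distinct radial levels, that between consecutive critical levels the link type is constant, and that passing a double point changes the link by exactly one crossing change of the prescribed sign; the sign conventions for double points and for crossings must be checked to be compatible. Once this geometric dictionary is nailed down, the remainder is the short inductive computation above. (An alternative, self-contained route avoiding the regular-homotopy discussion is to build $W$ as in Theorem \ref{thm:mainthm} directly and compare signatures, but the crossing-change argument is cleaner here and suffices for the inequality.)
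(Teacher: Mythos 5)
Your overall strategy coincides with the paper's: normalise the immersed disks so that the double points are realised by crossing changes, then track $\sigma\pm\eta$ using Lemma \ref{lem:crossingchange} together with the values $\sigma=0$, $\eta=k-1$ for the unlink. But there are two genuine problems in the execution. First, the regular homotopy removing the double points does not terminate at the unlink: it terminates at a link bounding $k$ disjoint \emph{embedded} disks in $D^4$, which need not be the unlink (take $p=n=0$ and $L$ a nontrivial slice link; your crossing-change sequence is then empty, yet $L\ne U$). The paper handles this by invoking \cite[Proposition 2.1]{imm} to factor the immersed disks as a concordance, followed by the trace of a homotopy realising the crossing changes, followed by a nullconcordance, and then using concordance invariance of $\sigma$ and $\eta$ \cite{kt}. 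Some such step is unavoidable and is absent from your argument.

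Second, your sign bookkeeping is off, and your key displayed inequality is false. With the convention used in this paper and in \cite{cl,imm}, a positive double point corresponds to changing a \emph{positive} crossing on the way from $L$ towards the slice link, equivalently to changing a \emph{negative} crossing on the way from the unlink back to $L$. Hence your inequality $\sigma(L)+\eta(L)\ge(k-1)-2n$ should have $p$ in place of $n$; as written it already fails for the positive Hopf link, which has $\sigma+\eta=-1$ and $k-1=1$ and bounds immersed disks with $p=1$, $n=0$. Moreover the rearrangement is incorrect: $\sigma+\eta\ge k-1-2n$ gives $n\ge\frac{-\sigma-\eta+k-1}{2}$, not $n\ge\frac{\sigma-\eta+k-1}{2}$. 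These two slips happen to cancel, so your final statements agree with the lemma, but the intermediate derivation is wrong. The correct one-line version is: going from $L$ to the slice link, $\sigma+\eta$ increases by at most $2$ per positive crossing change and does not increase otherwise, so $k-1\le\sigma+\eta+2p$, which is the bound on $p$; then apply this to $-L$ (which bounds disks with $n$ positive double points, $\sigma(-L)=-\sigma$, $\eta(-L)=\eta$) to get the bound on $n$ — or, as the paper does, use the $-$ sign in Lemma \ref{lem:crossingchange} to run both estimates on $L$ directly. You did flag the sign compatibility as a point needing care; it is not a formality here, since getting it backwards exchanges the roles of $p$ and $n$ in the conclusion.
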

\begin{proof}  
From \cite[Proposition 2.1]{imm} the immersed disks bounded by $L$ may be
isotoped so they are given by a concordance, followed by the trace of a regular
homotopy corresponding to changing $p$ positive and $n$ negative crossings,
followed by a nullconcordance.  Since signature and nullity are
concordance invariants \cite{kt}, the inequalities now follow from the previous lemma 
and the fact that the $k$-component unlink has signature zero and nullity
$k-1$.
\end{proof}

The following proposition will be used to prove Theorem \ref{thm:mainthm}, and
may be used to give another proof of Lemma \ref{lem:ineq}.
\begin{proposition}
\label{prop:cl}
Let $L$ be a $k$-component oriented link in $S^3 = \partial D^4$. Suppose $L$
bounds normally properly immersed disjoint oriented disks in 
$D^4$ with $p$ positive and $n$ negative self-intersections. Then
there exists an oriented smooth $4$-manifold $W$
with boundary $\partial W = \TwoF{L}$ such that
\begin{enumerate}
\item the second Betti number of $W$ satisfies $b_2(W) \leq 2(p+n) + 1-k + \eta(L)$;
\item the manifold $W$ has signature $\sigma(W) = -2n + \sigma(L)$;
\item the inclusion $\partial W \subset W$ induces
an injection $H^1(W; \qq) \hookrightarrow H^1(\partial W; \qq)$, with cokernel dimension equal to the nullity of the intersection form on $H_2(W)$;
\item there exist $p+n$ classes in $H_2(W;\zz)$ represented by pairwise disjoint spheres of self-intersection $-2$.  The images of these in $H_2(W;\zz)/\tors$ span a primitive sublattice; in other words the quotient of $H_2(W;\zz)/\tors$ by this sublattice is torsion-free. 
\end{enumerate}
\end{proposition}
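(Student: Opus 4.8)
The plan is to take for $W$ the double cover of $D^4$ branched over an embedded surface obtained by resolving all the double points, and then to extract the four conclusions from the topology of branched covers; the delicate point is the signature. Concretely, I would fix a normally immersed union $\Sigma$ of $k$ disks with $\partial\Sigma=L$, $p$ positive and $n$ negative double points, and perform the orientation-compatible resolution at each double point. As this is a local change inside disjoint balls disjoint from $\partial D^4$, it produces an oriented embedded surface $F\subset D^4$ with $\partial F=L$ and $\chi(F)=\chi(\Sigma)-2(p+n)=k-2(p+n)$. I then set $W:=\TwoF{D^4,F}$: the branched double cover exists (the meridians of the components of $F$ have a common image in $\zz/2\zz$) and $\partial W=\TwoF{\partial F}=\TwoF L$. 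Two facts follow at once: $\chi(W)=2\chi(D^4)-\chi(F)=2(p+n)+2-k$; and, pushing $F$ into $D^4$ so that with the $0$-handle it is built from a disk, some bands and further disks, $W$ admits a handle decomposition with handles of index at most $2$, so that $H_3(W;\qq)=H_4(W;\qq)=0$.

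Next I would prove (3) and deduce (1). The long exact sequence of $(W,\partial W)$ over $\qq$, together with Lefschetz duality, identifies $\coker\bigl(H^1(W;\qq)\to H^1(\partial W;\qq)\bigr)$ with the kernel of the map $H^2(W,\partial W;\qq)\cong H_2(W;\qq)\to \Hom(H_2(W;\qq),\qq)\cong H^2(W;\qq)$ induced by the intersection form, that is, with its radical, so that the cokernel has dimension equal to the nullity of the intersection form on $H_2(W)$; and injectivity of $H^1(W;\qq)\to H^1(\partial W;\qq)$ follows from $H^1(W,\partial W;\qq)\cong H_3(W;\qq)=0$. This gives (3). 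In particular $b_1(W)\le\dim H^1(\partial W;\qq)=b_1(\TwoF L)=\eta(L)$, and since $H_3(W)=H_4(W)=0$ the Euler characteristic computed above yields $b_2(W)=b_1(W)+\chi(W)-1\le 2(p+n)+1-k+\eta(L)$, which is (1).

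For (2) and (4) I would analyse $W$ near the resolved double points, using the description of the immersed disks in \cite[Proposition~2.1]{imm} (as in the proof of Lemma~\ref{lem:ineq}): a concordance of $L$, then the trace of a regular homotopy through $p$ positive and $n$ negative crossing changes, then a concordance to the unlink. Correspondingly $W$ is a union of branched covers: the concordances give rational homology cobordisms and the disjoint disks bounded by the unlink give a copy of $\natural^{k-1}(S^1\times D^3)$, all contributing $0$ to $b_2$ and to the signature, while each crossing change contributes the cobordism obtained by attaching two $2$-handles along the two lifts of the crossing circle $\gamma$ — there are two because $\gamma$ has even linking number with the link, and they cobound the lift of the crossing disk. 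A computation of the attaching framings shows that this rank-$\le 2$ contribution to the intersection form always has a diagonal entry $-2$, namely the square of the embedded sphere $S_j$ built from the two handle cores and the lifted crossing disk, and has signature $\sigma(L')-\sigma(L'')$ at a positive crossing change and $\sigma(L')-\sigma(L'')-2$ at a negative one, where $L',L''$ are the links before and after; by Lemma~\ref{lem:crossingchange} these differences lie in $\{0,-2\}$, respectively $\{0,2\}$. Novikov additivity and a telescoping sum over all the moves then give $\sigma(W)=\bigl(\sigma(L)-\sigma(\text{unlink})\bigr)-2n=\sigma(L)-2n$, which is (2). For (4), the $p+n$ spheres $S_j$ lie in disjoint pieces, hence are pairwise disjoint, each has square $-2$, and the same framing computation exhibits, block by block, a class of $H_2(W;\zz)$ meeting $[S_j]$ once; since a class $\sum a_j[S_j]$ divisible by $m$ in $H_2(W;\zz)/\tors$ then has each $a_j$ divisible by $m$, the span of the $[S_j]$ is primitive.

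The main obstacle is the signed framing computation underlying (2) and part of (4): determining the framings with which the two lifted $2$-handles are attached at a crossing change — equivalently, the relative self-linking in the double cover of the lifted crossing disk — and verifying that positive crossing changes contribute $0$ and negative ones $-2$ to the signature. This is exactly the content of \cite[Proposition~2.1]{cl} and \cite[Theorem~5.1]{st} in the knot case; once it is established, everything else is a formal consequence of the structure of the branched cover described above.
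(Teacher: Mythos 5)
Your choice of $W$ is where the argument breaks down. If you resolve every double point orientation-compatibly inside $D^4$ and set $W=\dbc{D^4,F}$, then parts (1) and (3) go through much as you say, but parts (2) and (4) fail whenever $n>0$. Locally, a positive double point is modelled on the two branches of $\{zw=0\}$ in $\cc^2$ with their complex orientations, and the compatible resolution $\{zw=\veps\}$ has double branched cover $\{u^2=zw-\veps\}\cong T^*S^2$, the disk bundle of Euler number $-2$; but a negative double point is the image of this model under the orientation-reversing map $(z,w)\mapsto(z,\bar w)$, so its compatible resolution contributes the disk bundle of Euler number $+2$, i.e.\ a sphere of square $+2$, not $-2$. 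Globally the same discrepancy shows up in the signature: capping your $F$ with a pushed-in Seifert surface for $-L$ produces a closed surface $\widetilde F$ in $S^4$ with $[\widetilde F]=0$, so the $G$-signature theorem gives $\sigma(\dbc{S^4,\widetilde F})=0$, and Novikov additivity then forces $\sigma(W)=\sigma(L)$ rather than $\sigma(L)-2n$. (Test case: the negative Hopf link with $p=0$, $n=1$; the compatible resolution is the negative Hopf band, whose branched double cover is the $+2$-disk bundle, of signature $+1=\sigma(L)$ instead of the required $-1$.) The paper avoids this by not resolving inside $D^4$ at all: it blows up, forming $C=D^4\#\,(p+n)\,\overline{\CP^2}$ and replacing a neighbourhood of each double point by two disjoint fibres of a punctured $(-1)$-disk bundle, so that every component remains a disk, every local branched cover is the $-2$-disk bundle regardless of the sign of the double point, and the sign enters only through the homology class of the branch surface (giving $[F]\cdot[F]=-4p$), which is precisely what produces the $-2n$ in the $G$-signature computation.

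The second half of your argument then silently switches to a different manifold --- the stack of branched covers of crossing-change cobordisms built from pairs of lifted $2$-handles --- and attributes its properties to the $W$ defined in the first half. Since that stacked manifold (were the framing computation you defer carried out) would have signature $\sigma(L)-2n$ while your $W$ has signature $\sigma(L)$, the two cannot agree when $n>0$; the conflation is not a presentational slip but the location of the error. A surgery-theoretic proof along the lines you sketch can be made to work (it is close in spirit to \cite{imm} and \cite{st}), but the framing computation you flag as ``the main obstacle'' is genuinely the content of the proposition and is not supplied. The paper's route through the blow-up and the $G$-signature theorem is the standard way to avoid it; note also that for the primitivity in (4) the paper does not hunt for dual classes but uses the pushforward $\pi\colon H_2(W;\zz)\to H_2(C;\zz)$, under which the $-2$-spheres map to pairwise orthogonal exceptional classes of square $-1$.
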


The proof is an adaptation to the case of links of arguments given in \cite{cl,imm}.

\begin{proof}
By forming connected sums with $\overline{\CP^2}$ and resolving the singularities as
in \cite[Lemma 3.4]{cl}, we obtain an embedded surface $\Delta$ in
\begin{align*}
C := D^4 \# \left( \underset{p+n}{\#} \overline{\CP^2} \right)
\end{align*}
whose boundary is the link $L$ in $\partial C$ and each of whose components
is a disk. 
The Euler characteristic of $C$ is $\chi(C) = 1 + p +n$.
The first homology of the complement of $\Delta$ in $C$ with $\zm$ coefficients has a basis consisting of the meridians of
the component disks of $\Delta$.  Thus we may take the double cover $W=\dbc{C,\Delta}$ of $C$
branched along $\Delta$.
By definition we have $\partial W = \TwoF{L}$.
\begin{claim}
The Euler characteristic of $W$ is $\chi(W) = 2(p+n) -k +2$.
\end{claim}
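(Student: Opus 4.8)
The claim asserts $\chi(W) = 2(p+n) - k + 2$ where $W = \Sigma_2(C,\Delta)$ is the double cover of $C$ branched along $\Delta$. The plan is to use the standard multiplicativity formula for the Euler characteristic of a branched double cover: if $W \to C$ is the double cover branched along a surface $\Delta$, then
$$\chi(W) = 2\chi(C) - \chi(\Delta).$$
This is because $W$ decomposes as two copies of $C \setminus \nu(\Delta)$ glued along the preimage of $\partial\nu(\Delta)$ — equivalently, one writes $\chi(W) = 2\chi(C\setminus\Delta) + \chi(\Delta) = 2(\chi(C)-\chi(\Delta)) + \chi(\Delta) = 2\chi(C)-\chi(\Delta)$, using that the branch locus in $W$ maps homeomorphically to $\Delta$.

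**Plugging in the ingredients.** We already know $\chi(C) = 1 + p + n$ from the excerpt (connect-summing $D^4$ with $p+n$ copies of $\overline{\CP^2}$ adds $p+n$ to the Euler characteristic, since $\chi(\overline{\CP^2}) = 3$ and we are doing an interior connected sum, which subtracts $2$ each time: $1 + (p+n)(3-2)$). For $\Delta$: it is a disjoint union of $k$ disks, so $\chi(\Delta) = k$. Note that the resolution of double points described in \cite[Lemma 3.4]{cl} genuinely produces embedded disks — the surgery that removes a double point does not change the Euler characteristic of the surface in the relevant bookkeeping, or rather one tracks it directly: each $\overline{\CP^2}$ summand lets one replace a neighborhood of a double point (two disks meeting at a point, Euler characteristic contribution needing care) by an annulus, and the upshot quoted in the excerpt is that $\Delta$ is a union of $k$ disks. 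Substituting:
$$\chi(W) = 2(1 + p + n) - k = 2(p+n) - k + 2,$$
as claimed.

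**The main subtlety.** The one step that deserves genuine care is the Euler characteristic formula for branched covers, specifically making sure the branch-locus contribution is counted with the right multiplicity: in $W$ the preimage of $\Delta$ is a single copy of $\Delta$ (the ramification is total there), whereas the preimage of $C \setminus \Delta$ is a genuine double cover. A clean way to present this without invoking a black-box formula is via a handle/CW decomposition: choose a CW structure on $C$ in which $\Delta$ is a subcomplex; then $W$ has a CW structure with two cells over each cell of $C \setminus \Delta$ and one cell over each cell of $\Delta$, giving $\chi(W) = 2(\chi(C) - \chi(\Delta)) + \chi(\Delta)$ directly. Everything else is the routine arithmetic above. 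I would also remark that this same $\chi(W)$ computation feeds into the bound $b_2(W) \le 2(p+n) + 1 - k + \eta(L)$ in part (1) of the proposition, once one controls $b_0, b_1, b_3$ of $W$ (here $b_1(W)$ is bounded using $b_1(\Sigma_2(L)) = \eta(L)$ and that $C$ is simply connected, while $b_3(W) = 0$ since $W$ is a rational homology ball-like filling only on the boundary — more precisely one argues $H_3(W;\qq)$ vanishes because $W$ is built from $C$ which has no $3$-handles after the connect sums).
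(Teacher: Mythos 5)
Your proof is correct and follows essentially the same route as the paper: both decompose $W$ into the double cover of $C\setminus\Delta$ (contributing $2(\chi(C)-k)$) plus one copy of the branch locus $\Delta$ (contributing $k$), and substitute $\chi(C)=1+p+n$. The extra remarks on CW structures and on part (1) of the proposition are fine but not needed for the claim.
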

This can be seen as follows: we have the equality
\begin{align*}
\chi(C) = \chi(C \setminus \Delta) + \chi(\Delta) =\chi(C \setminus \Delta) + k.
\end{align*}
Therefore the double cover of $C \setminus \Delta$ has
Euler characteristic $2(\chi(C)-k)$. The manifold $W$ is obtained
by gluing  $\Delta$ back in again. We obtain 
\begin{align*}
\chi(W) = 2( \chi(C) - k) + k = 2(p + n) -k +2.
\end{align*}

\begin{claim}
The groups $H^1(W,\partial W;\mathbb{Q})\cong H_3(W;\mathbb{Q})$ are trivial.
\end{claim}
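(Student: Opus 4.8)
The asserted isomorphism $H^1(W,\partial W;\qq)\cong H_3(W;\qq)$ is just Poincar\'e--Lefschetz duality for the compact oriented $4$--manifold $W$, so the plan is to prove $H_3(W;\qq)=0$. First I would isolate the part of the homology seen by the quotient $C$. The deck involution $\tau$ of the branched cover $W\to C$ preserves orientation --- it restricts to $-\mathrm{id}$ on the oriented normal bundle of its fixed surface --- and, $2$ being invertible in $\qq$, we get $H_*(W;\qq)=H_*(W;\qq)^{+}\oplus H_*(W;\qq)^{-}$ with $H_*(W;\qq)^{+}\cong H_*(C;\qq)$. Since $C=D^4\#(\#_{p+n}\overline{\CP^2})$ is homotopy equivalent to a wedge of $2$--spheres, $H_3(C;\qq)=0$, so the task reduces to killing $H_3(W;\qq)^{-}$.

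The next step is to see this anti--invariant part as a twisted homology group of the branch complement. Let $W_0=W\setminus\nu(\widetilde\Delta)$ be the honest double cover of $C_0:=C\setminus\nu(\Delta)$ classified by $\pi_1(C_0)\to\zm$ sending each meridian of $\Delta$ to the generator, and let $\mathcal L$ be the corresponding rank--one rational local system on $C_0$, meridians acting by $-1$. Excision together with the Thom isomorphism for the oriented normal bundle of $\widetilde\Delta$ gives $H_*(W,W_0;\qq)\cong H_{*-2}(\widetilde\Delta;\qq)$, on which $\tau$ acts trivially (again since $-\mathrm{id}$ on $\rr^2$ is orientation--preserving), so the anti--invariant part of this relative term vanishes; the long exact sequence of $(W,W_0)$ then gives $H_*(W;\qq)^{-}\cong H_*(W_0;\qq)^{-}\cong H_*(C_0;\mathcal L)$. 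Thus it remains to prove $H_3(C_0;\mathcal L)=0$. As $\mathcal L$ is self--dual, Poincar\'e--Lefschetz duality and the universal coefficient theorem give $\dim_\qq H_3(C_0;\mathcal L)=\dim_\qq H_1(C_0,\partial C_0;\mathcal L)$; and since the double covers of $C_0$ and of $\partial C_0$ are connected (both are connected), $\mathcal L$ is nontrivial on both, so $H_0(\partial C_0;\mathcal L)=0$ and the long exact sequence of the pair identifies $H_1(C_0,\partial C_0;\mathcal L)$ with $\coker\big(H_1(\partial C_0;\mathcal L)\to H_1(C_0;\mathcal L)\big)$.

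Everything has now been reduced to the surjectivity of $H_1(\partial C_0;\mathcal L)\to H_1(C_0;\mathcal L)$. Adding back the untwisted summand --- whose restriction is onto, because $H_1(C_0;\qq)$ is generated by the meridians of $\Delta$ and these lie on $\partial C_0$ --- this is equivalent to surjectivity of $H_1(\partial W_0;\qq)\to H_1(W_0;\qq)$, and hence to $H_3(W;\zz)$ being torsion. This is the point where the construction of $\Delta$ has to enter. By \cite[Proposition~2.1]{imm} the immersed disks can be presented as a concordance, followed by the trace of a sequence of $p+n$ crossing changes, followed by a nullconcordance; combined with the $\overline{\CP^2}$--resolutions of \cite[Lemma~3.4]{cl}, this should allow one to put $\Delta$ in a position adapted to the standard handle decomposition of $C$ (one $0$--handle and $p+n$ two--handles) and thereby to describe $W$ by a handle decomposition with handles only of index $0$, $1$, $2$ --- a single $0$--handle, $k-1$ one--handles accounting for the boundary unlink, and a controlled number of $2$--handles --- whence $H_3(W;\zz)=0$ outright. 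I expect this last step, placing the branch surface in a compatible position and transcribing it into a handle diagram of the branched cover in which no $3$-- or $4$--handles occur, to be the main obstacle; the preceding reductions are formal.
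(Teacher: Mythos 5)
Your formal reductions are fine as far as they go, but the argument is ultimately circular and the one step carrying real content is left unproven. The eigenspace decomposition $H_*(W;\qq)=H_*(W;\qq)^+\oplus H_*(W;\qq)^-$, the identification of the anti-invariant part with $H_*(C_0;\mathcal L)$, and the duality reducing $H_3(C_0;\mathcal L)=0$ to surjectivity of $H_1(\partial C_0;\mathcal L)\to H_1(C_0;\mathcal L)$ are all correct. But as you yourself observe, that surjectivity is equivalent to surjectivity of $H_1(\partial W_0;\qq)\to H_1(W_0;\qq)$, which is equivalent to the vanishing of $H_3(W;\qq)$ --- the statement you set out to prove. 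Nothing in the chain of reductions has used the essential geometric input, namely that the components of $\Delta$ are \emph{disks each meeting $\partial C$}. Your proposed way of finally injecting that input --- a handle decomposition of $W$ with handles of index at most $2$ --- is not carried out, is flagged by you as ``the main obstacle,'' and is moreover doubtful as stated: the immersed disks are presented as a concordance followed by crossing changes followed by a nullconcordance, and the double branched cover over the concordance pieces need not avoid index-$3$ handles (a death in the movie of the concordance, i.e.\ a local maximum of a split unknotted component, lifts to a $3$-handle in the branched cover). One would then have to argue that such $3$-handles do not contribute to $H_3$, which returns you to an algebraic argument of exactly the kind you are trying to avoid.

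For comparison, the paper's proof works with $\zm$ coefficients and the exact sequence of \cite[Theorem 1]{Lee95} for double branched covers,
$$0=H^1(C,\partial C;\zm)\to H^1(W,\partial W;\zm)\to H^1(C,\partial C\cup\Delta;\zm),$$
and the whole point is that $H^1(C,\partial C\cup\Delta;\zm)=0$ because $C$ is simply connected and $\partial C\cup\Delta$ is \emph{connected} --- each component of $\Delta$ is a disk with boundary on $\partial C$. That connectivity is precisely the place where the structure of the branch surface enters, and it is the input your proposal never supplies. If you want to complete your local-system approach, the analogous missing lemma is that $H_1(W_0;\qq)$ is carried by $\partial W_0$, and proving it will again come down to the fact that each disk of $\Delta$ touches the boundary; at that point your argument would essentially reproduce the paper's, in a heavier framework.
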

We show that $H_3(W; \zm)$ vanishes.  By Poincar\'{e} duality, $H^1(C, \partial C; \zm) = 0$.
Then note that for a set $F \subset C$ such that
$\partial C \cup F$ is connected, we get
$$
H^0(C;\zm)\overset{\cong}{\rightarrow}H^0(\partial C \cup F; \zm)\rightarrow H^1(C, \partial C \cup F; \zm)\rightarrow H^1(C; \zm)=0.
$$
This implies that $H^1(C, \partial C \cup \Delta; \zm) = 0$.four-
From \cite[Theorem 1]{Lee95} and
dualising, we obtain the following exact sequence:
$$
0=H^1(C, \partial C; \zm)\rightarrow H^1(W, \partial W; \zm)\rightarrow H^1(C, \partial C \cup \Delta; \zm) = 0.
$$
Therefore $H^1(W, \partial W; \zm) = 0$ holds.
By Poincar\'{e} duality we obtain directly that $H_3(W; \zm) = 0$.

\begin{claim}
The inclusion $\partial W \subset W$ induces an injection $H^1(W;\qq) \hookrightarrow H^1(\partial W; \qq)$, whose cokernel is isomorphic to the nullspace of the intersection form on $H_2(W;\qq)$.
In particular, the inequality $\eta(L)=b_1(\partial W) \geq b_1(W)$ holds.
\end{claim}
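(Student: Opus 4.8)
The plan is to read everything off the long exact sequence of the pair $(W,\partial W)$ with rational coefficients, combined with \poincare--Lefschetz duality. Consider the segment
$$H^1(W,\partial W;\qq)\longrightarrow H^1(W;\qq)\xrightarrow{\ \rho\ } H^1(\partial W;\qq)\xrightarrow{\ \delta\ }H^2(W,\partial W;\qq)\xrightarrow{\ \iota^*\ }H^2(W;\qq),$$
where $\rho$ is the restriction map in question and $\iota^*$ is induced by inclusion. By the previous claim $H^1(W,\partial W;\qq)=0$, so $\rho$ is injective, which gives the asserted injection; moreover $\coker(\rho)=\operatorname{im}(\delta)=\ker(\iota^*)$.

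The next step is to identify $\ker(\iota^*)$ with the nullspace of the intersection form on $H_2(W;\qq)$. \poincare--Lefschetz duality supplies isomorphisms $H^2(W,\partial W;\qq)\cong H_2(W;\qq)$ and $H^2(W;\qq)\cong H_2(W,\partial W;\qq)$ which are natural with respect to the exact sequences of the pair; under them $\iota^*$ is carried to the inclusion-induced map $j_*\colon H_2(W;\qq)\to H_2(W,\partial W;\qq)$. Since the intersection form $Q_W$ on $H_2(W;\qq)$ factors as the composite of $j_*$ with the isomorphisms $H_2(W,\partial W;\qq)\cong H^2(W;\qq)\cong\Hom_\qq(H_2(W;\qq),\qq)$, the radical of $Q_W$ coincides with $\ker(j_*)\cong\ker(\iota^*)=\coker(\rho)$. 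This proves the statement about the cokernel.

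Finally, exactness of the displayed sequence gives $\dim_\qq H^1(\partial W;\qq)=\dim_\qq H^1(W;\qq)+\dim_\qq\coker(\rho)$, that is, $b_1(\partial W)=b_1(W)+\operatorname{null}(Q_W)\ge b_1(W)$; and $b_1(\partial W)=b_1(\dbc{L})=\eta(L)$ as recalled above from \cite{kt}. This yields the inequality.

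The one point that takes care is the naturality assertion: one must know that under the \poincare--Lefschetz isomorphisms the map $\iota^*$ in the cohomology sequence of $(W,\partial W)$ genuinely corresponds to $j_*$ in the homology sequence, so that $\ker(\iota^*)$ is literally the radical of $Q_W$ and not merely abstractly isomorphic to the kernel of some map. This is classical, following from naturality of the cap product with the fundamental class $[W,\partial W]$, so no real obstacle arises; everything else is a dimension count over a field.
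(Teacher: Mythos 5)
Your argument is correct and follows essentially the same route as the paper: both use the long exact sequence of the pair $(W,\partial W)$ with $\qq$ coefficients, the vanishing of $H^1(W,\partial W;\qq)$ from the previous claim, and the identification of the connecting map's target map $H^2(W,\partial W;\qq)\to H^2(W;\qq)$ with the intersection pairing via \poincare--Lefschetz duality. The paper compresses the duality step into the phrase ``with the last map given in appropriate bases by the matrix of the intersection pairing,'' which you have simply spelled out in more detail.
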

The long exact sequence of the pair $(W, \partial W)$ with $\qq$ coefficients gives
\begin{align*}
0\rightarrow H^1(W) \rightarrow H^1(\partial W) \rightarrow H^2(W,\partial W) \rightarrow H^2(W),
\end{align*}
with the last map given in appropriate bases by the matrix of the intersection
pairing.  The claim follows immediately.

Combining these claims, we obtain an upper bound for the second Betti number
$b_2(W)$.  The Euler characteristic of $W$ is $\chi(W) = 1 - b_1(W) + b_2(W)$.
This gives
\begin{align*}
b_2(W) &= b_1(W) - 1 + 2(p + n) -k +2 \\
&= 2(p+n) + 1 - k + b_1(W) \\
&\leq 2(p+n) +1 -k + \eta(L).
\end{align*}

We proceed to calculate the signature $\sigma(W)$.
This is done exactly as in \cite{cl}. For the reader's convenience we
repeat the argument.
\begin{claim}
The signature of $W$ is $\sigma(W) = -2n+\sigma(L)$.
\end{claim}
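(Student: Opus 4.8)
The plan is to deduce the signature from the $G$-signature theorem, exactly as in \cite{cl}, applied to the covering involution $\tau\colon W\to W$ whose quotient is $C$ and whose fixed-point set is a copy of the branch surface $\Delta$. Rationally, $H_2(W;\qq)$ splits as $\tau$-invariant and $\tau$-anti-invariant parts whose signatures are $\sigma(C)$ and $\sigma(W)-\sigma(C)$ respectively, and the $G$-signature theorem identifies the difference of these two numbers with (half) the self-intersection of $\Delta$, together with a correction term depending only on $L=\partial\Delta$. That correction term equals the classical link signature $\sigma(L)$; this is essentially Kauffman--Taylor \cite{kt}, whose statement in the case $C=D^4$ says that the double branched cover of $D^4$ along a properly embedded surface $F$ with $\partial F=L$ has signature $\sigma(L)$. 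I would record the resulting identity in the form
\begin{equation*}
\sigma(W)=2\sigma(C)-\tfrac12\,[\Delta]\cdot[\Delta]+\sigma(L),
\end{equation*}
where $[\Delta]\cdot[\Delta]$ is the self-intersection of the class $[\Delta]\in H_2(C,\partial C;\zz)$, which is well defined because $\partial C$ is an integral homology sphere; I would pin down the normalisation by checking it against the $k$-component unlink (where all terms vanish) and the two trefoils.

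With the identity in hand, the computation reduces to evaluating the two terms for $C=D^4\#\bigl(\underset{p+n}{\#}\,\overline{\CP^2}\bigr)$. The first is immediate: $\sigma(C)=-(p+n)$. For the second I would analyse the resolutions of \cite[Lemma 3.4]{cl}: each component of $\Delta_0$ is null-homologous in $D^4$, so $[\Delta]$ is carried by the summands $\overline{\CP^2}$ introduced at the double points, and there one checks that resolving a \emph{positive} double point leaves $\Delta$ homologous to twice the exceptional sphere, whereas resolving a \emph{negative} double point can be performed homologically trivially. Writing $e_i$ for the exceptional classes, with $e_i\cdot e_i=-1$, this gives $[\Delta]=\sum_{i\ \text{positive}}\pm 2e_i$ and hence $[\Delta]\cdot[\Delta]=-4p$. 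Substituting,
\begin{equation*}
\sigma(W)=-2(p+n)-\tfrac12(-4p)+\sigma(L)=\sigma(L)-2n,
\end{equation*}
which is the claim; together with $|\sigma(W)|\le b_2(W)$ and the bound on $b_2(W)$ obtained above, this also recovers the inequality $p\ge\bigl(-\sigma(L)-\eta(L)+k-1\bigr)/2$ of Lemma \ref{lem:ineq}.

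The main obstacle is the second step, namely determining $[\Delta]$ — in particular, seeing that positive and negative double points contribute asymmetrically to $[\Delta]\cdot[\Delta]$ even though every summand of $C$ is a copy of $\overline{\CP^2}$. This is the orientation bookkeeping carried out in \cite[Lemma 3.4]{cl}; it is moreover forced by a base case, since the right-handed trefoil bounds an immersed disk with one positive and no negative double point, so the identity must return $\sigma(W)=-2$, which requires $[\Delta]\cdot[\Delta]=-4$ rather than $0$. A secondary point is the precise form of the branched-cover signature identity over a manifold with boundary, but this is classical and, beyond fixing the normalisation, plays no further role.
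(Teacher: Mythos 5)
Your proposal is correct and follows essentially the same route as the paper: the $G$-signature theorem, the computation $[\Delta]\cdot[\Delta]=-4p$ coming from the asymmetric way positive and negative double points are resolved in the $\overline{\CP^2}$ summands, and the Viro/Kauffman--Taylor identification of the signature of the double branched cover of $D^4$ over a pushed-in Seifert surface with $\sigma(L)$. The one step you leave as a black box --- the relative $G$-signature identity with boundary correction term $\sigma(L)$ --- is precisely where the paper does its work, capping $(C,\Delta)$ off with a pushed-in Seifert surface $F_{-L}$ in $-D^4$ and combining Novikov additivity with the closed $G$-signature theorem; ``pinning down the normalisation by checking examples'' would not substitute for that capping-off argument, though it leads to the same identity.
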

We pick a connected Seifert surface $F_{-L}$ of the link $-L$ with interior pushed into $-D^4$. 
Glue to obtain a closed $4$-manifold $(\widetilde{C}, F) := (C, \Delta) \cup_{(S^3, L)} (-D^4, F_{-L})$. 
We consider the double cover $\widetilde{W}=\TwoF{\widetilde{C},F}$ of $\widetilde{C}$ branched along $F$.
This manifold has a decomposition $\widetilde{W} = W \cup_{\TwoF{L}} X_L$, where $X_L$ is the double
cover $\TwoF{F_{-L}}$ of $-D^4$ branched along $F_{-L}$. By \cite{viro,kt} the signature of $X_L$ is $-\sigma(L)$, and by Novikov additivity 
\cite{index}, we have
\begin{align*}
\sigma(\widetilde{W}) = \sigma(W) + \sigma(X_L).
\end{align*}
Furthermore the $G$-signature theorem \cite{index,Gsig} allows us to express $\sigma(\TwoF{\widetilde{C},F})$ as
\begin{align*}
\sigma(\widetilde{W}) = 2 \sigma(\widetilde{C}) - \frac{1}{2}\left ( [F] \cdot [F]\right )
\end{align*}
The self-intersection of $F$ is $[F]\cdot [F] = -4p$; this is because each intersection point in the original immersed disks is replaced by two fibres of the disk bundle over $S^2$ which is the punctured $\overline{\CP^2}$ used to resolve the singularity, and these two disks have the same orientation in the case of a positive double point and opposite in the case of a negative double point.  Thus $[F]$ has twice the generator of the second homology of this punctured $\overline{\CP^2}$ as a summand in the case of a positive double point, and zero times this generator otherwise.
  We obtain the equations
\begin{align*}
\sigma(W) - \sigma(L)
& = 2 (\sigma(\widetilde{C}) +p) = 2(\sigma(C) +p) = -2n\\
\Rightarrow \sigma(W) &= -2n + \sigma(L).
\end{align*}
 
The existence of $p+n$ pairwise disjoint spherical classes $x_1,\dots,x_{p+n}$ of square $-2$
follows as in \cite[Theorem 2.2]{imm} from the fact that the double cover of a
disk bundle with Euler number $-1$ branched along two fibres is a disk bundle of
Euler number $-2$. It remains to see that their images in $H_2(W;\zz)/\tors$ span a primitive sublattice. 
Let $\alpha \in H_2(W;\zz)$ and
$$m\alpha=\sum b_i x_{i}+\beta,$$
where $\beta$ is a torsion class and $m$ a nonzero integer. Denote
the induced map in homology of the double branched cover by 
$\pi \colon H_2(W;\zz) \rightarrow H_2(C;\zz)$. The classes
$\pi(x_i)$ are pairwise orthogonal of square $-1$. Thus we obtain
\begin{align*}
\pi(m\alpha) \cdot \pi(x_i) = -b_i
\end{align*}
and therefore $m$ divides each $b_i$.
\end{proof}

\begin{proof}[Proof of Theorem \ref{thm:mainthm}]
Suppose as in the theorem that $L$ is an oriented $k$-component link in $S^3$
with  signature $\sigma$ and  nullity $\eta$, and that $L$ bounds a properly
normally immersed union of $k$ oriented disks in the 4-ball with $p$
positive double points and $n$ negative double points.

By Proposition \ref{prop:cl}
there exists a smooth 4-manifold $W$ bounded by $Y=\dbc{L}$ with  
$$2(p+n)+1-k+\eta\ge b_2(W)\ge-\sigma(W)=2n-\sigma.$$ 
This inequality in fact recovers
\eqref{eq:pineq}, and equality in \eqref{eq:pineq} thus proves that $W$ is
negative-definite.  Proposition \ref{prop:cl} also tells us that the inclusion of $Y$
into $W$ induces an isomorphism $H^1(W; \qq) \cong H^1(Y; \qq)$, and of the
existence of $p+n$ disjoint spheres of self-intersection $-2$ whose images in $H_2(W;\zz)/\tors$ span
a primitive sublattice.
\end{proof}

\begin{proof}[Proof of Corollary \ref{cor:alt}]
Recall that a link diagram $D$ determines two Goeritz matrices $G$ and $G'$
(see \cite{gl} for details and Section \ref{sec:eg} for an example).  It is shown
in \cite{gl} that these are the intersection forms of the double covers of
$D^4$ branched along the pushed-in black surface coming from either of the two
chessboard colourings of the regions of $S^2\setminus D$; denote these
simply-connected 4-manifolds by $X$ and $X'$.  The union $X\cup
-X'=\dbc{S^4,F}$ of these Gordon-Litherland manifolds is equal to the double
cover of $S^4=D^4\cup -D^4$ branched along the surface $F$ obtained by pushing
one of these surfaces into each hemisphere of the 4-sphere.  A local argument
at the crossings shows that if $D$ is alternating then $\pm\dbc{S^4,F}$ is a
connected sum of complex projective planes, with one summand for each crossing (the union of
black and white subsurfaces in the neighbourhood of a crossing gives a
punctured $\RP^2$, whose double branched cover is a punctured $\pm\CP^2$; all
signs are the same if the diagram is alternating).

Thus for an alternating diagram one of the Goeritz matrices is
positive-definite and the other is negative-definite.  This recovers the
well-known fact that the nullity of a nonsplit alternating link is zero, since
the Goeritz matrices present $H_1(\dbc{L};\zz)$.

Now suppose $L$ satisfies the hypotheses of the Corollary and that $X$ is the
positive-definite Gordon-Litherland manifold associated to an alternating
diagram of $L$, with intersection lattice $\Lambda=(H_2(X;\zz),G)$ of rank
$m$.  By Theorem \ref{thm:mainthm},  $\dbc{L}$ bounds a
negative-definite manifold $W$ with $b_2(W)=2n-\sigma$, 
and with a primitive sublattice of $H_2(W;\zz)/\tors$ spanned by $p+n$
pairwise orthogonal classes of square $-2$.  Gluing $X$ to $-W$ by
a diffeomorphism of their boundaries gives a smooth closed positive-definite
manifold $\widetilde{X}$.  

By Donaldson's diagonalisation theorem \cite{don} we know that 
$(H_2(\widetilde{X};\zz)/\tors,Q_{\widetilde{X}})$ is isometric
to the standard integer lattice $Z=\zz^{m+2n-\sigma}$.
Now the Mayer-Vietoris exact sequence 
shows that we have an inclusion
$$\Lambda\oplus (H_2(-W;\zz)/\tors,Q_{-W})\subset Z,$$
as required.
\end{proof}

We also note the following generalisation of \cite[Corollary 3.21, second inequality]{kt} and
\cite[Corollary 3]{kohn}.  The lower bound on the number of double points also
follows easily from Lemma \ref{lem:ineq}.  The case of links with nullity zero
is given in \cite[Corollary 4.3]{kaw14}, and indeed the case of nonzero nullity may also be derived from results in \cite{kaw14}.

\begin{lemma}  
\label{lem:det}
Let $L$ be a $k$-component link $L$ in $S^3$ with nullity $\eta$.  Then
$$c^*(L)\ge k-1-\eta.$$
If $c^*(L)\le k-1$, then $\det L=2^{k-1}c^2$ for some $c\in\zz$.
\end{lemma}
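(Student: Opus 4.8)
The plan is to get the first inequality immediately from Lemma~\ref{lem:ineq}, and the divisibility statement by combining Lemma~\ref{lem:ineq} and Theorem~\ref{thm:mainthm} with a homological computation relating the order of $H_1(\TwoF L;\zz)$ to the determinant of the intersection form of the $4$-manifold produced by Theorem~\ref{thm:mainthm}.

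For the first inequality, realise $c^*(L)$ by a properly normally immersed union of $k$ disks in $D^4$ with $p+n=c^*(L)$ double points, and orient $L$ using an orientation of these disks. Adding the two inequalities of Lemma~\ref{lem:ineq} gives $p+n\ge k-1-\eta$, hence $c^*(L)\ge k-1-\eta$. For the divisibility statement, I would first dispose of the case $\eta>0$: then $b_1(\TwoF L)=\eta>0$, so $H_1(\TwoF L;\zz)$ is infinite and $\det L=0=2^{k-1}\cdot 0^2$. So assume $\eta=0$; the first part then forces $c^*(L)=k-1$, and in immersed disks realising this both inequalities of Lemma~\ref{lem:ineq} must be equalities, so in particular equality holds in \eqref{eq:pineq}. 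Theorem~\ref{thm:mainthm} then supplies a negative-definite $W$ with $\partial W=Y:=\TwoF L$, with $b_2(W)=2(p+n)+\eta-k+1=k-1$, with $H^1(W;\qq)\cong H^1(Y;\qq)=0$ (hence $b_1(W)=0$), and with $p+n=k-1$ pairwise disjoint spheres of square $-2$ whose images span a primitive sublattice of $H_2(W;\zz)/\tors$. Being primitive and of full rank $k-1=b_2(W)$, this sublattice is all of $H_2(W;\zz)/\tors$, and the spheres being disjoint it is orthogonal, so $(H_2(W;\zz)/\tors,Q_W)\cong\langle-2\rangle^{\oplus(k-1)}$ and $|\det Q_W|=2^{k-1}$.

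It then remains to compare $\det L=|H_1(Y;\zz)|$ with $|\det Q_W|$, and I expect this to be the main obstacle. The plan is to run the long exact sequence of $(W,Y)$ with $\zz$ coefficients together with Poincar\'e--Lefschetz duality and the universal coefficient theorem. Since $b_1(Y)=b_1(W)=0$ one gets $H_2(Y;\zz)=0$ and $H_3(W;\qq)=0$, so the sequence reduces to
$$0\to H_2(W)\xrightarrow{\,j\,} H_2(W,Y)\xrightarrow{\,\partial\,} H_1(Y)\xrightarrow{\,i\,} H_1(W)\to H_1(W,Y)\to 0 .$$
Here $H_2(W,Y)\cong H^2(W)$ has torsion subgroup $\cong\tors H_1(W)$ and free quotient $(H_2(W)/\tors)^*$, the free part of $j$ being the adjoint of $Q_W$; and $H_1(W,Y)\cong H^3(W)\cong\tors H_2(W)$. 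A direct order count in the sequence (using that $H_1(W)\to H_1(W,Y)$ is onto because $H_0(Y)\to H_0(W)$ is injective) gives $|\im i|=|\tors H_1(W)|/|\tors H_2(W)|$, which is an integer since it is the order of a subgroup of $H_1(W)$; call it $c$. It also gives $|\coker j|=|\det Q_W|\cdot c$. Since $\coker j\cong\ker i$ and $|H_1(Y)|=|\ker i|\cdot|\im i|$, I conclude $\det L=|\det Q_W|\cdot c^2=2^{k-1}c^2$, as required.

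The delicate point is precisely this last comparison: the naive shortcut of capping $Y$ off by a simply-connected Gordon--Litherland manifold $X$ with $|\det Q_X|=\det L$ to form a closed $4$-manifold $Z=X\cup_Y(-W)$ with unimodular intersection form only shows that $2^{k-1}\det L$ is a perfect square, which is strictly weaker than $\det L=2^{k-1}c^2$ (it permits, e.g., $\det L$ an odd square). So one genuinely has to track the torsion of $H_*(W)$ through the exact sequence above in order to locate the square factor correctly; alternatively, for $\eta=0$ the statement can be extracted from \cite[Corollary 4.3]{kaw14}.
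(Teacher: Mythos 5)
Your proof is correct and follows essentially the same route as the paper: the paper gets $c^*(L)\ge k-1-\eta$ from the sandwich $2c^*(L)-(k-1-\eta)\ge b_2(W)\ge c^*(L)$ supplied by Proposition \ref{prop:cl} rather than by summing the two inequalities of Lemma \ref{lem:ineq}, but it explicitly notes that your route also works. For the determinant the paper likewise observes that $H_2(W;\zz)/\tors$ must be the orthogonal lattice $\langle-2\rangle^{\oplus(k-1)}$ spanned by the spherical classes, and then reads off $\det L=2^{k-1}c^2$ from the long exact sequence of the pair $(W,Y)$, with $c$ the order of the image of the torsion of $H^2(W;\zz)$ in $H^2(Y;\zz)$ --- the same order count you carry out in detail (and which the paper outsources elsewhere to \cite[Lemma 2.1]{qhs}).
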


\begin{proof}
From Proposition \ref{prop:cl} there exists a smooth 4-manifold $W$ bounded by $Y=\Sigma_2(L)$ with
$$2c^*(L)-(k-1-\eta)\ge b_2(W)\ge c^*(L).$$
This yields the lower bound on $c^*(L)$.  If the nullity $\eta$ is nonzero then the determinant is zero.
If $c^*(L)=k-1$ and $\eta=0$, we find $b_2(W)=c^*(L)$ and, again from Proposition \ref{prop:cl}, $H_2(W;\zz)$ has a basis of pairwise orthogonal classes with self-intersection $-2$.  It follows that the absolute value of the intersection form on $H_2(W;\zz)$ is $2^{k-1}$.  From the long exact sequence in cohomology of the pair $(W,Y)$ we see that
$$\det L=|H^2(Y;\zz)|=2^{k-1}c^2,$$
where $c$ is the order of the image of the torsion subgroup of $H^2(W;\zz)$ in $H^2(Y;\zz)$.
\end{proof}

It is also useful to generalise another basic observation of Kohn \cite[inequality (1)]{kohn}.  This may be used recursively.
\begin{lemma}
\label{lem:sublinks}
Let $L=A\cup B$ be an oriented link in $S^3$ which is a union of sublinks $A$ and $B$.  Then 
$$u(L)\ge u(A)+u(B)+\left|\lk(A,B)\right|,$$
where $\lk(A,B)$ denotes the linking number between $A$ and $B$, and
$$c^*(L)\ge c^*(A)+c^*(B)+\left|\lk(A,B)\right|.$$
\end{lemma}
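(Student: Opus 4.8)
The plan is to prove both inequalities by the same bookkeeping: classify the crossing changes (respectively, the double points) as internal to $A$, internal to $B$, or \emph{mixed} between $A$ and $B$, and estimate the three contributions separately. For the unlinking bound I would fix a diagram $D$ of $L$ realising $u(L)$, together with $u(L)$ crossing changes taking $D$ to a diagram $D'$ of an unlink. Each crossing of $D$ has its two strands on components of $A$, on components of $B$, or one on each; let $a$, $b$, $c$ be the numbers of crossing changes of these three types, so $a+b+c=u(L)$. Deleting the strands of $B$ from $D$ gives a diagram of $A$, and the crossing changes meeting it are precisely the type-$A$ ones; performing only those yields the corresponding sub-diagram of $D'$, which is a diagram of an unlink because every sublink of an unlink is unlinked. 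Hence $a\ge u(A)$ and, symmetrically, $b\ge u(B)$. Finally, changing a crossing internal to $A$ or to $B$ leaves $\lk(A,B)$ unchanged while changing a mixed crossing alters it by $\pm1$; since $L$ has linking number $\lk(A,B)$ between its sublinks and the unlink has linking number $0$, we get $c\ge\left|\lk(A,B)\right|$. Summing the three estimates gives $u(L)\ge u(A)+u(B)+\left|\lk(A,B)\right|$.

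For the bound on $c^*$ I would run the identical argument using a normally properly immersed union $\Delta$ of $k=|A|+|B|$ disks in $D^4$ with $\partial\Delta=L$ and $c^*(L)$ double points, writing $\Delta=\Delta_A\cup\Delta_B$ for the disks bounded by $A$ and by $B$ and splitting the double points into those internal to $\Delta_A$, those internal to $\Delta_B$, and those between the two, of cardinalities $a$, $b$, $c$. The sub-union $\Delta_A$ is by itself a normally properly immersed union of $|A|$ disks bounded by $A$ with $a$ double points, so $a\ge c^*(A)$, and likewise $b\ge c^*(B)$. For $c$, I would use the standard fact that the algebraic intersection number in $D^4$ of two properly immersed surfaces bounded by disjoint links $A$ and $B$ in $S^3=\partial D^4$ equals $\lk(A,B)$ --- this is independent of the chosen surfaces since $H_2(D^4)=0$, and can be evaluated by pushing Seifert surfaces into the collar of $\partial D^4$ --- so $c\ge\left|\lk(A,B)\right|$, and summing again gives the claim.

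The elementary points --- that crossing changes commute with deleting strands, that a mixed crossing change moves $\lk(A,B)$ by exactly $\pm1$, and that discarding some of the immersed disks leaves the rest normally immersed --- are routine. The one step deserving genuine care is the four-dimensional computation identifying the signed count of mixed double points with $\lk(A,B)$; this is the natural analogue in the $c^*$ setting of the linking-number step in the unlinking argument, and it is where I would be most attentive.
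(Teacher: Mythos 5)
Your proof is correct, and for the unlinking-number inequality it is the same bookkeeping the paper uses: split the crossing changes into those internal to $A$, internal to $B$, and mixed, and bound the three counts by $u(A)$, $u(B)$ and $\left|\lk(A,B)\right|$ respectively. For the $c^*$ inequality your route differs in one step. The paper does not work directly with the immersed disks: it first invokes the normal form of Owens--Strle (\cite[Proposition 2.1]{imm}), which isotopes the immersed disks into a concordance followed by crossing changes followed by a nullconcordance, and then runs the three-dimensional argument again using the concordance invariance of linking number and of the sublink types. You instead restrict the immersed union $\Delta$ to $\Delta_A$ and $\Delta_B$ on the nose, getting $a\ge c^*(A)$ and $b\ge c^*(B)$ immediately, and bound the mixed double points below by the algebraic intersection number $\Delta_A\cdot\Delta_B=\lk(A,B)$, justified by $H_2(D^4)=0$ and a push-in of Seifert surfaces. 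Both arguments are sound; yours is more self-contained and avoids the external normal-form proposition, while the paper's reuses machinery it already needs elsewhere (the same proposition underlies Lemma \ref{lem:ineq}) and keeps all the linking-number reasoning in the three-dimensional crossing-change picture. You are right that the identification of the signed count of mixed intersections with $\lk(A,B)$ is the one step needing care, and your sketch of it is the standard and correct one.
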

\begin{proof}
The first inequality follows since crossing changes involving only a sublink $A$ do not change the link type of $B$ or the linking number between $A$ and $B$, and crossing changes involving both $A$ and $B$ change $\lk(A,B)$ by one and do not change the link type of $A$ or $B$.  The second inequality also uses \cite[Proposition 2.1]{imm} which tells us that immersed disks in $D^4$ bounded by $L$ are isotopic to a concordance followed by crossing changes followed by a nullconcordance, and the fact that linking number is a link concordance invariant.
\end{proof}

%%%%%%%%%%%%%%%%%%%%%%%%%%%%%%%%%%%%%%%%%%%%%%%%%%%%%%%%%%%%%%%%%%%%%%%%%%

\section{Linking forms and Heegaard Floer correction terms}
\label{sec:rhod}
In this section we recall some obstructions to a 3-manifold $Y$ bounding a 4-manifold with a given intersection form, with specific attention to the case that $Y$ is the double branched cover of a link in $S^3$.  For more details on this material see, e.g., \cite{mac}.

If $M$ is an orientable manifold of dimension at most 4, then there is a free and transitive action of $H^2(M;\zz)$ on the set $\spinc(M)$ of \spinc structures on $M$.  
%Given $\spincs\in\spinc(M)$ we let $\jj(\spincs)$ denote the conjugate \spinc structure.
For the rest of this section we let $Y$ be a rational homology 3-sphere, i.e., a closed oriented 3-manifold with $b_1(Y)=0$.  There is then a natural inclusion of the set of spin structures on $Y$ into $\spinc(Y)$, and the spin structures are the fixed points of the involution given by conjugation of \spinc structures.

The $\rho$-invariant of $Y$ (see for example \cite{AbsGrHF}) is the function
$$\rho:\spinc(Y)\to \qq/2\zz$$
defined as follows.  For each $\spinct\in\spinc(Y)$ we choose a \spinc 4-manifold $(X,\spincs)$ with a fixed diffeomorphism $\partial X\cong Y$ taking
$\spincs|_{\partial X}$ to $\spinct$, and define
\begin{equation}
\label{eq:rhodef}
\rho(Y,\spinct)=\frac{c_1(\spincs)^2-\sigma(X)}{4}\pmod2.
\end{equation}
This is well-defined due to Novikov additivity of the signature and the fact that for any closed \spinc 4-manifold $(X,\spincs)$ we have $c_1(\spincs)^2\equiv\sigma(X)\pmod8$.  Also note that this changes sign under orientation-reversal, i.e., $$\rho(-Y,\spinct)=-\rho(Y,\spinct).$$

Given an $n\times n$ matrix $Q$ with integer entries, let
$$\Char(Q)=\{\xi\in\zz^n\,:\,\xi_i\equiv Q_{ii}\pmod2,\,i=1,\dots n\}.$$
Then there is a free and transitive action of $\zz^n$ on $\Char(Q)$ given by $(x,\xi)\mapsto\xi+2x$ and correspondingly a free and transitive action of the group $\zz^n/Q\zz^n$ on $\Char(Q)/2Q\zz^n$.
The following proposition is proved in \cite{mac}, and follows from the fact that Chern classes of \spinc structures on a 4-manifold $X$ are characteristic covectors for the intersection form of $X$, together with careful consideration of the long exact sequence in cohomology of the pair $(X,\partial X)$.  It is also observed in \cite{mac} that the $\rho$-invariant is a quadratic enhancement of the linking form, and that the following obstruction to bounding $Q$ is equivalent to that coming from the linking form of $Y$.

\begin{proposition}
\label{prop:rho}
Suppose a rational homology 3-sphere $Y$ is the boundary of a smooth 4-manifold
$W$ with intersection form represented by an $n\times n$ matrix $Q$, with
$\left|\det Q \right|=\delta$.

Then the order of $H=H^2(Y;\zz)$ is $\delta t^2$, and there exists a subgroup $T<H$ of order $t$ and an affine monomorphism 
$$\phi:\Char(Q)/2Q\zz^n\to\spinc(Y)/T$$
which is equivariant with respect to the involutions coming from multiplication by $-1$ on $\Char(Q)$ and conjugation of \spinc structures, such that
\begin{align}
\dfrac{\xi^TQ^{-1}\xi-\sigma(Q)}{4}&\equiv \rho(Y,\spinct)\pmod2\label{eq:rhocong}
\end{align}
for all $\xi\in\Char(Q)$ and $\spinct\in\phi([\xi])$.
\end{proposition}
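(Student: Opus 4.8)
The plan is to deduce Proposition \ref{prop:rho} from the long exact sequence of the pair $(W,Y)$ together with the fact that Chern classes of \spinc structures on $W$ are characteristic covectors for $Q$, following the approach in \cite{mac}. First I would set up the algebraic picture. Let $n=b_2(W)$ and identify $H^2(W;\zz)/\tors$ with $\zz^n$ so that the intersection form is $Q$. The long exact sequence in cohomology of $(W,Y)$ reads
\begin{equation*}
H^1(Y;\zz)\to H^2(W,Y;\zz)\xrightarrow{j^*} H^2(W;\zz)\xrightarrow{i^*} H^2(Y;\zz)\to H^3(W,Y;\zz),
\end{equation*}
and since $Y$ is a rational homology sphere $H^1(Y;\zz)$ is finite, so after killing torsion the map $\zz^n\cong H^2(W,Y;\zz)/\tors\to H^2(W;\zz)/\tors\cong\zz^n$ is precisely multiplication by $Q$. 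A diagram chase then produces a short exact sequence relating $\coker Q=\zz^n/Q\zz^n$ to $H^2(Y;\zz)$; the torsion of $H^2(W;\zz)$ and $H^1(Y;\zz)$ contribute a subgroup $T$ of $H=H^2(Y;\zz)$ whose order $t$ satisfies $|H|=|\det Q|\cdot t^2=\delta t^2$ (the square arising because the torsion linking pairing on $T$ is nondegenerate, or equivalently because $T$ appears symmetrically in the two ends of the exact sequence). This gives the order statement and the subgroup $T$, and the induced map $\zz^n/Q\zz^n\to H/T$ is an isomorphism of groups; restricting the $H^2(W;\zz)$-action to $\Char(Q)$ and $\spinc(Y)$ turns it into the affine monomorphism $\phi$ on the level of \spinc structures modulo $T$.

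Next I would check equivariance and the congruence. Conjugation of \spinc structures corresponds on first Chern classes to $c_1\mapsto -c_1$, and since the restriction map $\spinc(W)\to\spinc(Y)$ intertwines conjugation, and $c_1$ of a \spinc structure on $W$ reduces to a characteristic covector $\xi\in\Char(Q)$ modulo torsion, the map $\phi$ is equivariant with respect to $\xi\mapsto-\xi$ on $\Char(Q)/2Q\zz^n$ and conjugation on $\spinc(Y)/T$. For the congruence \eqref{eq:rhocong}: given $\xi\in\Char(Q)$, lift it to $c_1(\spincs)$ for some \spinc structure $\spincs$ on $W$ with $\spincs|_Y=\spinct\in\phi([\xi])$. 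Then $\spincs$ restricts to $\spinct$, so by the definition \eqref{eq:rhodef} of the $\rho$-invariant, using $(W,\spincs)$ as the bounding 4-manifold,
\begin{equation*}
\rho(Y,\spinct)\equiv\frac{c_1(\spincs)^2-\sigma(W)}{4}\pmod 2.
\end{equation*}
The point is that $c_1(\spincs)^2$, computed using the (possibly degenerate over $\zz$, but nondegenerate over $\qq$) intersection form, equals $\xi^T Q^{-1}\xi$ once one works rationally: $c_1(\spincs)$ is Poincar\'e dual to an element of $H_2(W,Y;\qq)$ whose image in $H_2(W;\qq)\cong\qq^n$ is $Q^{-1}\xi$, and pairing with $c_1(\spincs)\in H^2(W;\qq)\cong\qq^n$ gives $\xi^T Q^{-1}\xi$. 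Since $\sigma(W)=\sigma(Q)$, this yields \eqref{eq:rhocong}. One must also verify that the value $\rho(Y,\spinct)$ does not depend on which representative \spincs over a fixed class $[\xi]$ modulo $2Q\zz^n$ is chosen, but changing $\xi$ by $2Qx$ changes $\xi^TQ^{-1}\xi$ by $4x^T\xi+4x^TQx\equiv 0\pmod 8$ (as $\xi$ is characteristic, $x^T\xi\equiv x^TQx\pmod 2$), so the expression is well-defined modulo $2$, consistently with $T$-invariance of $\rho$ on each coset.

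The main obstacle, and the part requiring genuine care rather than formal manipulation, is the bookkeeping of torsion in the long exact sequence: identifying the subgroup $T<H$ precisely, proving the order formula $|H|=\delta t^2$, and checking that $\phi$ is well-defined and injective as a map on \spinc structures modulo $T$ rather than merely on first Chern classes (two \spinc structures on $W$ with the same $c_1$ can differ by a 2-torsion class, and one must track how such ambiguity restricts to $Y$). This is exactly the ``careful consideration of the long exact sequence in cohomology of the pair $(X,\partial X)$'' referred to in the statement, and the cleanest route is to cite \cite{mac} for the detailed diagram chase while indicating the key mechanism above; the interpretation of $\rho$ as a quadratic enhancement of the linking form, also from \cite{mac}, then explains why this obstruction coincides with the linking-form obstruction.
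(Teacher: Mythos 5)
Your proposal follows precisely the route the paper itself indicates: the paper does not reprove Proposition \ref{prop:rho} but defers to \cite{mac}, describing the argument as the characteristic-covector property of Chern classes combined with a careful analysis of the long exact sequence of $(W,\partial W)$, which is exactly what you flesh out. Your derivation of the congruence \eqref{eq:rhocong} — computing $\rho(Y,\spinct)$ via \eqref{eq:rhodef} with $W$ itself as the bounding \spinc manifold, identifying $c_1(\spincs)^2$ with $\xi^TQ^{-1}\xi$ rationally, and checking invariance under $\xi\mapsto\xi+2Qx$ using $x^T\xi\equiv x^TQx\pmod 2$ — is correct, as is the equivariance argument. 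One slip to fix: the induced map $\zz^n/Q\zz^n\to H/T$ is \emph{not} an isomorphism in general, since its source has order $\delta$ while $H/T$ has order $\delta t$; it is only a monomorphism, which is what the proposition asserts and what the remainder of your argument actually uses.
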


In \cite{AbsGrHF}, \ozsvath\ and \szabo\ associate a rational number $d(Y,\spinct)$, called a \emph{correction term}, to each \spinc structure $\spinct$ on $Y$.  They showed that the reduction of $d(Y,\spinct)$ modulo 2 is $\rho(Y,\spinct)$, and also that the correction terms give additional constraints on negative-definite \spinc manifolds bounded by $(Y,\spinct)$.  These constraints give rise to the following refinement of Proposition \ref{prop:rho}, which is proved in \cite[Theorem 2.2]{qhs}.

\begin{proposition}
\label{prop:d}
Suppose a rational homology 3-sphere $Y$ is the boundary of a smooth 4-manifold $W$ with negative-definite intersection form represented by an $n\times n$ matrix $Q$, with $\left|\det Q\right|=\delta$.

Then the order of $H=H^2(Y;\zz)$ is $\delta t^2$, and there exists a subgroup $T<H$ of order $t$, and a $\zz/2\zz$-equivariant affine monomorphism 
$$\phi:\Char(Q)/2Q\zz^n\to\spinc(Y)/T$$
satisfying
\begin{align}
\dfrac{\xi^TQ^{-1}\xi+n}{4}&\le d(Y,\spinct)\label{eq:dineq}\\
\mbox{and }\quad
\dfrac{\xi^TQ^{-1}\xi+n}{4}&\equiv d(Y,\spinct)\pmod2\label{eq:dcong}
\end{align}
for all $\xi\in\Char(Q)$ and $\spinct\in\phi([\xi])$.
\end{proposition}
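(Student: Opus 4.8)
The plan is to deduce Proposition~\ref{prop:d} from Proposition~\ref{prop:rho}, which already supplies the algebraic scaffolding ($T$, $t$, and the monomorphism $\phi$, together with the congruence), and from the $d$-invariant inequality for negative-definite fillings due to \ozsvath\ and \szabo\ \cite{AbsGrHF}, which is the one genuinely new input. Indeed this is essentially the content of \cite[Theorem 2.2]{qhs}.

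First I would reexamine the proof of Proposition~\ref{prop:rho}, applied now to the given negative-definite $W$: it uses only that $Q$ represents the intersection form of $W$, so it produces the same conclusions, and moreover exhibits $\phi$ concretely. Namely, the Chern classes of \spinc structures on $W$ are exactly the characteristic elements of $H^2(W;\zz)$; these form a coset of $2H^2(W;\zz)$, and their images under the surjection $H^2(W;\zz)\to H^2(W;\zz)/\tors\cong\zz^n$ fill out the coset $\Char(Q)\subset\zz^n$, so \emph{every} $\xi\in\Char(Q)$ is realised as the image of $c_1(\spincs)$ for some \spinc structure $\spincs$ on $W$. A diagram chase in the long exact sequence of $(W,Y)$ then gives the subgroup $T=\im(\tors H^2(W;\zz)\to H^2(Y;\zz))$, the identity $|H^2(Y;\zz)|=\delta t^2$ with $t=|T|$, and the affine $\zm$-equivariant monomorphism $\phi\colon\Char(Q)/2Q\zz^n\to\spinc(Y)/T$, characterised by the property that $\spinct\in\phi([\xi])$ exactly when $\spinct=\spincs|_Y$ for some such $\spincs$. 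Since $\sigma(Q)=-n$ (as $Q$ is negative-definite) and $d(Y,\spinct)\equiv\rho(Y,\spinct)\pmod2$, the congruence \eqref{eq:rhocong} is verbatim \eqref{eq:dcong}, so only \eqref{eq:dineq} remains.

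For \eqref{eq:dineq}, I would fix $\xi\in\Char(Q)$, choose $\spincs$ on $W$ with $c_1(\spincs)$ mapping to $\xi$, and set $\spinct=\spincs|_Y$, an arbitrary member of $\phi([\xi])$. Because $Y=\partial W$ is a rational homology sphere and $W$ is negative-definite, the inequality of \cite{AbsGrHF} gives $c_1(\spincs)^2+b_2(W)\le 4\,d(Y,\spinct)$. It then remains to identify the left-hand side: nondegeneracy of the intersection form (forced by $b_1(Y)=0$) makes $n=b_2(W)$ and $H^2(W,Y;\qq)\to H^2(W;\qq)$ an isomorphism, through which one computes $c_1(\spincs)^2=\xi^{T}Q^{-1}\xi$; this is \eqref{eq:dineq}. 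The main obstacle is precisely the bookkeeping of the previous paragraph — verifying that the $\phi$ of Proposition~\ref{prop:rho} is, up to the quotient by $T$, induced by restriction of \spinc structures from $W$, so that a compatible $\spincs$ is available to feed into the \ozsvath--\szabo\ bound, and that the self-pairing computed in $H^2(W,Y;\qq)$ really equals $\xi^{T}Q^{-1}\xi$. Once these identifications are in place, the rest is a one-line combination of \eqref{eq:rhocong} with $d\equiv\rho$ and the \ozsvath--\szabo\ inequality.
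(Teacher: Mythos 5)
Your proposal is correct and coincides with the paper's approach: the paper gives no proof of its own, citing \cite[Theorem 2.2]{qhs} after observing that the \ozsvath--\szabo\ correction-term constraints refine Proposition \ref{prop:rho}. Your argument---reusing the construction of $T$ and $\phi$ from the proof of Proposition \ref{prop:rho} and feeding a compatible \spinc structure $\spincs$ on $W$ into the negative-definite inequality $c_1(\spincs)^2+b_2(W)\le 4\,d(Y,\spinct)$---is precisely the proof carried out in that reference.
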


We note that in Propositions \ref{prop:rho} and \ref{prop:d}, the subgroup $T$ is the image in $H^2(Y;\zz)$ of the torsion subgroup of $H^2(X;\zz)$ under the map induced by restriction to the boundary.
It is often helpful to consider the fixed points of the involutions on $\Char(Q)/2Q\zz^n$ and $\spinc(Y)/T$.  In particular each coset of $\spinc(Y)/T$ which contains a spin structure is such a fixed point, and from \cite[Lemma 2.4]{qhs} these are the only fixed cosets unless there exists an element $\alpha$ in $H^2(Y;\zz)/ T$ of order 4 with $2\alpha\in T$.  In particular if the order of $T$ is odd then each fixed coset in $\spinc(Y)/T$ contains a spin structure.  If $Y=\Sigma_2(L)$ is the double cover of a link in $S^3$ then there is a bijection due to Turaev \cite{turaev} between quasi-orientations on $L$ (orientations up to overall reversal) and spin structures on $Y$.  The following lemma is taken from \cite{lconc} and \cite{qa}. 

\begin{lemma}
\label{lem:dbc}
Let $\spinct$ be the spin structure on $Y=\Sigma_2(L)$ associated to an orientation on a link $L$ in $S^3$, and let $\sigma(L)$ be the signature of $L$ with the same orientation.
Then  
$$\rho(Y,\spinct)\equiv-\sigma(L)/4\pmod2.$$
If $L$ is nonsplit and alternating, or indeed quasi-alternating, then
$$d(Y,\spinct)=-\sigma(L)/4.$$
\end{lemma}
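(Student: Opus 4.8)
The plan is to establish the two assertions separately, relating the $\rho$-invariant and the correction term of $Y=\Sigma_2(L)$ at the spin structure $\spinct$ coming from an orientation of $L$ to the signature $\sigma(L)$. For the first assertion, I would produce a concrete \spinc 4-manifold bounded by $(Y,\spinct)$ whose Chern class square and signature can both be computed in terms of $\sigma(L)$, and then invoke the definition \eqref{eq:rhodef} of $\rho$. The natural choice is $X_L=\Sigma_2(D^4,F)$, the double cover of $D^4$ branched along a pushed-in Seifert surface $F$ for $L$; this is the same building block used in the signature computation in the proof of Proposition \ref{prop:cl}. By \cite{viro,kt} we have $\sigma(X_L)=\sigma(L)$ (with the orientation conventions matching), and the branch locus is nullhomologous in $X_L$, so $X_L$ carries a natural \spinc structure $\spincs$ with $c_1(\spincs)$ torsion; hence $c_1(\spincs)^2=0$. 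The key point, which I would verify carefully, is that the restriction of this \spinc structure to $\partial X_L=Y$ is exactly the spin structure $\spinct$ Turaev associates to the chosen orientation — this identification of Turaev's spin structure with the one induced by the branched cover of a Seifert surface is the technical heart of the first claim, and it is where I would lean on \cite{turaev} and the references \cite{lconc,qa} mentioned in the statement. Plugging into \eqref{eq:rhodef} gives $\rho(Y,\spinct)\equiv(0-\sigma(L))/4=-\sigma(L)/4\pmod 2$.

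For the second assertion, assume $L$ is nonsplit alternating (or quasi-alternating). The inequality $d(Y,\spinct)\le -\sigma(L)/4$ should follow from Proposition \ref{prop:d}: taking $W$ to be the negative-definite Gordon–Litherland manifold $X'$ (or $-X$) from the proof of Corollary \ref{cor:alt}, whose intersection form $Q$ is a negative-definite Goeritz matrix, one has $\sigma(Q)=-n$ where $n=\operatorname{rk} Q$, and by work of Gordon–Litherland the quantity $\xi^TQ^{-1}\xi$ for the characteristic covector $\xi$ corresponding to the given chessboard colouring and orientation computes (up to the appropriate normalisation) the Gordon–Litherland signature formula, yielding $(\xi^TQ^{-1}\xi+n)/4=-\sigma(L)/4$; then \eqref{eq:dineq} gives the one-sided bound. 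For the reverse inequality I would pass to the mirror: applying the same argument to $-L$, whose double branched cover is $-Y$ and whose signature is $-\sigma(L)$, and using $d(-Y,\spinct)=-d(Y,\spinct)$ together with $\rho(-Y,\spinct)=-\rho(Y,\spinct)$, gives $d(-Y,\spinct)\le \sigma(L)/4$, i.e. $d(Y,\spinct)\ge -\sigma(L)/4$. Combining the two bounds yields the equality.

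The main obstacle I anticipate is not the Floer-theoretic input — Propositions \ref{prop:rho} and \ref{prop:d} do all the heavy lifting there — but rather the bookkeeping of orientations, spin structures, and sign conventions: matching Turaev's quasi-orientation-to-spin-structure bijection with the spin structure induced on $\partial X_L$ by the branched double cover of a Seifert surface, and matching the characteristic covector appearing in Proposition \ref{prop:d} for the Goeritz lattice with the specific chessboard colouring so that the Gordon–Litherland signature formula reproduces $\sigma(L)$ exactly rather than $\sigma(L)$ shifted by a correction term. Since the statement asserts this is "taken from \cite{lconc} and \cite{qa}", I would keep the proof brief, citing those sources for the detailed verification of the spin-structure identification and the Gordon–Litherland signature normalisation, and only spell out the short deduction from Propositions \ref{prop:rho} and \ref{prop:d} together with the mirroring argument.
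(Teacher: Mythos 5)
Your proof of the first assertion is essentially the paper's: take $X=\Sigma_2(D^4,F)$ for a pushed-in Seifert surface $F$, use Viro/Kauffman--Taylor for $\sigma(X)=\sigma(L)$, note that $c_1$ of the (unique) spin structure on $X$ vanishes, and check via \cite{lconc} that its restriction to $Y$ is $\spinct$; substituting into \eqref{eq:rhodef} gives the congruence. For the second assertion the paper does nothing more than cite \cite{qa}, which you also offer as a fallback, so the proposal as a whole is acceptable.

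That said, your sketched derivation of the equality from Proposition \ref{prop:d} does not work as written. First, \eqref{eq:dineq} reads $(\xi^TQ^{-1}\xi+n)/4\le d(Y,\spinct)$, so applying it to the negative-definite Goeritz form of $L$ yields a \emph{lower} bound on $d(Y,\spinct)$, not the upper bound you claim; the mirror then supplies the upper bound, so the two halves of your squeeze are swapped (the squeeze itself can survive this). More seriously, for the squeeze to close you must show that the maximum of $(\xi^TQ^{-1}\xi+n)/4$ over the spin coset equals exactly $-\sigma(L)/4$ for the Goeritz lattice, and dually for the mirror. The Gordon--Litherland formula expresses the correction to $\sigma(G)$ as a sum of local crossing signs, not as $\xi^TQ^{-1}\xi$ for an identified characteristic covector, and identifying which covector maximises that quantity on the spin coset is precisely the nontrivial content of \cite[Proposition 3.2]{osu1}; it is not a consequence of Proposition \ref{prop:d} alone. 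Finally, a quasi-alternating link that is not alternating has no alternating diagram and hence no definite Goeritz form, so this route cannot reach the quasi-alternating case at all. These are good reasons to do exactly what the paper does and simply quote \cite{qa} for the second statement.
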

\begin{proof}
Let $X$ be the double cover of $D^4$ branched along a pushed-in Seifert surface $F$ for the oriented link $L$.  By \cite[Proposition 3.3]{lconc} there is a unique spin structure $\spincs$ on $X$, and the restriction of $\spincs$ to $Y$ is $\spinct$.  By \cite[Lemma 1.1]{kt} or \cite{viro} the signature of $X$ is equal to the signature of the oriented link.  The formula for the $\rho$-invariant then follows from \eqref{eq:rhodef} since the first Chern class of the spin structure $\spincs$ is zero.

The equality for quasi-alternating links is proved in \cite{qa}.
\end{proof}

%%%%%%%%%%%%%%%%%%%%%%%%%%%%%%%%%%%%%%%%%%%%%%%%%%%%%%%%%%%%%%%%%%%%%%%%%%

\section{Examples}
\label{sec:eg}
In this section we will complete Kohn's table of unlinking numbers for two-component links with crossing number 9 or less by showing that each of
$$9^2_3=L9a30,\, 9^2_{15}=L9a15,\, 9^2_{27}=L9a17,\, 9^2_{31}=L9a2,\,9^2_{36}=L9a10$$ 
has unlinking number 3.  In fact we will show that none of them bound a pair of normally properly immersed disks in the 4-ball with two double points.   Each of these may be unlinked by changing 3 crossings in their minimal diagrams, as may be readily seen from Figures \ref{fig:alllinks} and \ref{fig:L9a10}.

\subsection{The links $L9a2$, $L9a15$, $L9a17$, $L9a30$}
The following lemma implies the statement in Example \ref{eg:kohn}.

\begin{figure}[htbp]
\begin{center}
\labellist
\small\hair 3pt
\pinlabel $L9a2$ at 210 620
\pinlabel $L9a15$ at 700 620
\pinlabel $L9a17$ at 210 40
\pinlabel $L9a30$ at 700 40
\endlabellist
\includegraphics[width=0.70\textwidth]{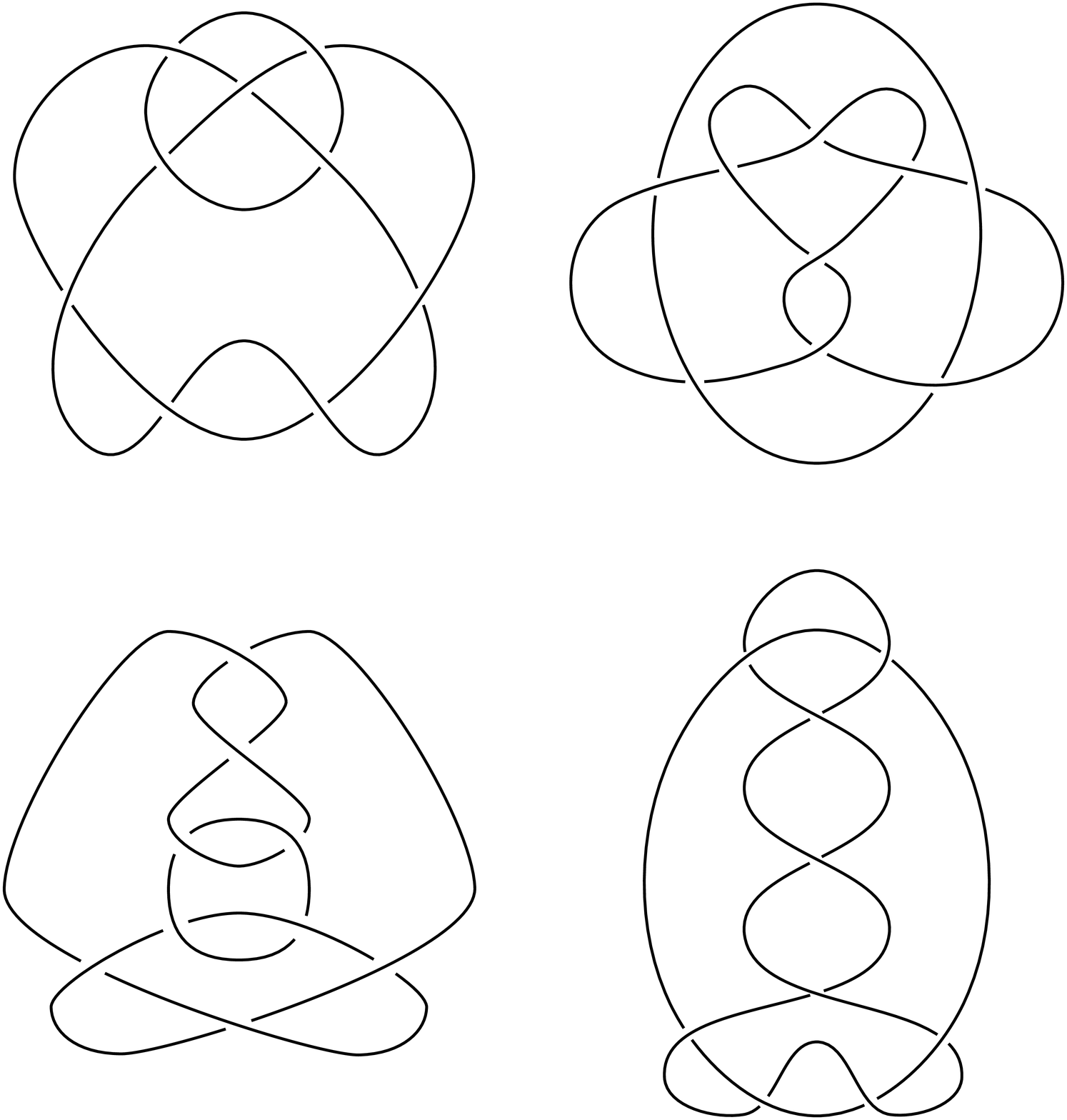}
\caption {The links $L9a2$, $L9a15$, $L9a17$, and $L9a30$.}
\label{fig:alllinks}
\end{center}
\end{figure}

\begin{lemma}
\label{lem:kohn}
For any oriented link in $S^3$, half the absolute value of the signature is a lower bound for the 4-ball crossing number.

Let $L$ be a 2-component link in $S^3$.  Suppose that $H=H^2(\Sigma_2(L);\zz)$ is finite cyclic.  If $c^*(L)<3$, then at least one of the following holds:
\begin{enumerate}[(i)]
\item $\det(L)=2t^2$,  or
\item $\det(L)$ is divisible by 4, and at least one of the signatures of $L$ has absolute value 1, or
\item $\det(L)$ is divisible by 16.
\end{enumerate}
\end{lemma}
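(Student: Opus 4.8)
The plan is to feed the negative-definite filling of $Y=\Sigma_2(L)$ provided by Theorem~\ref{thm:mainthm} into the linking-form obstruction of Proposition~\ref{prop:rho}, using Lemma~\ref{lem:dbc} to recover the two Murasugi signatures $\sigma_1,\sigma_2$ of $L$ from the $\rho$-invariants at the spin structures of $Y$. The preliminary reductions are quick: the first assertion follows from Lemma~\ref{lem:ineq} (bounding $p+n$ below by the larger right-hand side and using $\eta\le k-1$); and since $H$ is finite we have $\eta=0$, so Lemma~\ref{lem:det} gives $c^*(L)\ge k-1=1$, and if $c^*(L)\le1$ then Lemma~\ref{lem:det} already yields $\det(L)=2t^2$, alternative~(i). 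So I would assume $c^*(L)=2$, fix a minimal immersion, and write $p_j$, $n_j=2-p_j$ for the numbers of positive and negative double points for the quasi-orientation $j\in\{1,2\}$.

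For each $j$, combining $p_j+n_j=2$ with the two inequalities of Lemma~\ref{lem:ineq} forces equality in at least one of them; after replacing $L$ by its mirror if needed (which negates $\sigma_1,\sigma_2$ and preserves every hypothesis and each of (i)--(iii)) I may assume equality holds in \eqref{eq:pineq} for some quasi-orientation. Theorem~\ref{thm:mainthm} then gives a negative-definite $W$ with $\partial W=Y$, $b_2(W)=2\cdot2+0-2+1=3$, with $H^1(W;\qq)\cong H^1(Y;\qq)$, and with $\Lambda:=H_2(W;\zz)/\tors$ containing a primitive sublattice spanned by orthogonal square-$(-2)$ classes $e_1,e_2$. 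If $Q$ is a Gram matrix of $\Lambda$, then $\sigma(Q)=-3$ and, by Proposition~\ref{prop:rho}, $\det(L)=\lvert\det Q\rvert\,t^2$ for $t=\lvert T\rvert\ge1$, while the equivariant monomorphism $\phi$ there identifies $\zz^{3}/Q\zz^{3}\cong\Lambda^*/\Lambda$ with a subgroup of the cyclic group $H^2(Y;\zz)/T$; hence $\Lambda^*/\Lambda$ is cyclic.

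Since $\langle e_1,e_2\rangle$ is primitive of corank one, $\Lambda=\langle e_1,e_2\rangle\oplus\zz v$, and after subtracting multiples of $e_1,e_2$ from $v$ we may assume $v\cdot e_i\in\{0,1\}$; a $3\times3$ determinant then gives $\lvert\det Q\rvert\in\{4\gamma,\,4\gamma-2,\,4\gamma-4\}$ with $\gamma=-v\cdot v$, according as $(v\cdot e_1,v\cdot e_2)$ is $(0,0)$, a permutation of $(0,1)$, or $(1,1)$. Cyclicity of $\Lambda^*/\Lambda$ kills the first case (there $\Lambda^*/\Lambda$ contains $\zz/2\oplus\zz/2$). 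In the others $\Lambda\cong\langle-2\rangle\oplus\bigl(\begin{smallmatrix}-2&1\\1&-\gamma\end{smallmatrix}\bigr)$ with $\lvert\det Q\rvert=2(2\gamma-1)$, or $Q=\bigl(\begin{smallmatrix}-2&0&1\\0&-2&1\\1&1&-\gamma\end{smallmatrix}\bigr)$ with $\lvert\det Q\rvert=4(\gamma-1)$; the first with $\gamma=1$ gives $\det(L)=2t^2$, alternative~(i). In each surviving case the involution $[\xi]\mapsto[-\xi]$ on $\Char(Q)/2Q\zz^{3}$ has exactly two fixed classes; since $\phi$ is injective and $\spinc(Y)/T$ has at most two conjugation-fixed cosets (as $H$ is cyclic), every spin structure of $Y$ lies in the $\phi$-image of one of those two fixed classes, so $\rho(Y,\spinct_1),\rho(Y,\spinct_2)$ lie among the two numbers $\tfrac14(\xi^TQ^{-1}\xi+3)\bmod2$ computed at them. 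Combining this with $\rho(Y,\spinct_j)\equiv-\sigma_j/4\pmod2$ (Lemma~\ref{lem:dbc}) and the bound $\lvert\sigma_j\rvert\le2c^*(L)=4$ pins down $\sigma_1,\sigma_2$: one finds $\{\sigma_1,\sigma_2\}\subseteq\{-3,-1,1\}$, with some $\sigma_j=\pm1$ once $\lvert\det Q\rvert>2$, and inspecting the $2$-adic valuation of $\det(L)=\lvert\det Q\rvert\,t^2$ then lands us in (ii) (when $4\mid\det(L)$) or (iii) (when $16\mid\det(L)$); the examples $L9a2,L9a15,L9a17,L9a30$ follow since none of (i)--(iii) holds for them.

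I expect the last step to be the main obstacle. The matching of conjugation-fixed spin structures with fixed characteristic covectors needs care when $t$ is even (the two spin structures then share a coset, so one obtains only a single congruence for both $\sigma_j$), and the finite case analysis over the lattice families has to be pushed through carefully; in the delicate first family with $\gamma\ge2$ one seems to need additional input from the minimal immersion itself (for instance, the value $\sigma_j=-3$ forces all $p_j=2$ double points of that quasi-orientation to be positive, so the two immersed disks are disjoint and $\lk(L_1,L_2)=0$) in order to eliminate the residual configurations not covered by (i)--(iii).
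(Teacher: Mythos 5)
Your overall route is the paper's route: reduce (after mirroring) to equality in \eqref{eq:pineq} for some quasi-orientation, invoke Theorem \ref{thm:mainthm} to get a negative-definite rank-three filling of $\Sigma_2(L)$ carrying two orthogonal square-$(-2)$ classes spanning a primitive sublattice, normalise the Gram matrix in a basis $v,e_1,e_2$ with $v\cdot e_i\in\{0,1\}$, discard the split case by cyclicity, and read off the signatures from Proposition \ref{prop:rho} and Lemma \ref{lem:dbc} at the conjugation-fixed classes. Your preliminary reductions (the first assertion via Lemma \ref{lem:ineq}, the case $c^*(L)\le1$ via Lemma \ref{lem:det}, and the forced equality in one of the two inequalities when $p+n=2$ and $\sigma$ is odd) all agree with the paper. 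One of your two stated worries dissolves on comparison with the paper: in the family $Q_a=\left(\begin{smallmatrix}-a&1&1\\1&-2&0\\1&0&-2\end{smallmatrix}\right)$ one has $\det(L)=(4a-4)t^2$, so if $t$ is even then $16\mid\det(L)$ and you are in case (iii) with no spin-structure bookkeeping at all; only for $t$ odd is the matching needed, and there the two spin structures lie in distinct fixed cosets, which are exactly the images of the two fixed classes of $\Char(Q_a)/2Q_a\zz^3$.

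The genuine gap is the one you flag yourself: the family $\langle-2\rangle\oplus\left(\begin{smallmatrix}-2&1\\1&-\gamma\end{smallmatrix}\right)$ with $\gamma\ge2$, of determinant $2(2\gamma-1)$. For $t$ odd this gives $\det(L)=2(2\gamma-1)t^2\equiv2\pmod4$, so (ii) and (iii) are unavailable, and (i) fails whenever $2\gamma-1$ is not a square (already for $\gamma=2$, i.e.\ $\det(L)=6t^2$); moreover the $\rho$-computation at the two fixed classes of this form is consistent with signatures in $\{\pm1,\pm3\}$, so the spin-structure congruences do not kill it either. Your suggested escape (deducing $\lk(L_1,L_2)=0$ from the signs of the double points) is not developed into an argument, and I do not see how it interacts with the lattice. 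The paper disposes of this juncture in one sentence: it asserts that a negative-definite form $\left(\begin{smallmatrix}-a&b&c\\b&-2&0\\c&0&-2\end{smallmatrix}\right)$ presenting a finite cyclic group may, after change of basis, be taken to have either $a=b=1$, $c=0$ (determinant $-2$, hence case (i)) or $b=c=1$, $a>1$ (the $Q_a$ family); that is, it claims your $\gamma\ge2$ subfamily does not survive normalisation. To close your proof you must either justify that normalisation or find the extra constraint that excludes the subfamily; note that this cannot be a purely lattice-theoretic consequence of the stated hypotheses, since for example $-A_2\oplus\langle-2\rangle$ is negative definite with cyclic discriminant group $\zz/6$ and contains an orthogonal pair of $(-2)$-vectors spanning a primitive sublattice, yet has determinant $6$. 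As written, your argument is incomplete at precisely this point.
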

\begin{proof}
The well-known fact that the 4-ball crossing number is bounded below by half of the absolute value of
the signature follows as in the proof of Lemma \ref{lem:ineq}, since signature
is a concordance invariant and changes by at most 2 when a crossing changes,
and since the signature of the unlink is zero.

Now suppose $L$ is a 2-component link and $H=H^2(\Sigma_2(L);\zz)$ is finite
cyclic.  Since $H$ is finite, the nullity of $L$ is zero.  It follows from
Lemma \ref{lem:crossingchange} that for any orientation of $L$, the signature
$\sigma$ is odd.  Thus if $c^*(L)<3$ then the signatures of $L$, for either
quasi-orientation, are elements of $S=\{-3,-1,1,3\}$.

By assumption, the link $L$ bounds a normally immersed pair of disks in $D^4$
with $p$ positive and $n$ negative double points, with $p+n=2$ (if $c^*(L)<2$
we  introduce extra double points).
We claim that after possibly reflecting and for some choice of orientation, we have equality in
\eqref{eq:pineq}.  This is immediate if $L$ has signature $-3$, and if $L$ has
signature $3$ then we can reflect.  If $L$ has signature 1, then $n \geq 1$ by Lemma \ref{lem:ineq}.
Thus either $p=0$ and we have equality in \eqref{eq:pineq} for $L$ or $p=1$ and
we have equality in \eqref{eq:pineq} for $-L$.

Assume then that $L$ is oriented with equality in \eqref{eq:pineq}.  By Theorem \ref{thm:mainthm}, $Y=\Sigma_2(L)$ bounds a 4-manifold $W$ whose
intersection form, in some basis, is given by 
$$Q=\begin{pmatrix}-a & b & c\\ b &
-2 & 0\\ c & 0 & -2 \end{pmatrix}.$$ 
Since this is negative-definite and
presents a finite cyclic group, we may assume after change of basis that either
$a=b=1$ and $c=0$, or $b=c=1$ and $a>1$.  In the first case $Q$ has determinant $-2$.  From the long exact sequence of the pair $(W,Y)$
we conclude that $H$ has order $2t^2$ for some integer $t$  (for more details see for example \cite[Lemma 2.1]{qhs}).

Now suppose that the intersection matrix of $W$ is represented by
$$Q=Q_a=\begin{pmatrix}-a & 1 & 1\\ 1 & -2 & 0\\ 1 & 0 & -2
\end{pmatrix},$$
for some $a>0$.  This presents $\zz/(4a-4)\zz$.  It  follows that $H$ has order $(4a-4)t^2$ for some integer $t$; in particular $\det(L)$ is divisible by 4.  If $t$ is even then $\det(L)$ is divisible by 16.  Suppose instead that $t$ is odd.   It follows from \cite[Lemma 2.4]{qhs} that both cosets of  $\spinc(Y)/T$ which are fixed under the
involution coming from conjugation in fact contain a spin structure.  Thus we
can combine Proposition \ref{prop:rho} and Lemma \ref{lem:dbc} to compute the
signatures of $L$ using $Q$.  Fixed points of the involution on
$\Char(Q)/2Q(\zz^n)$ are given by characteristic covectors $\xi$ with
$$\xi=-\xi+2Qx,$$ or in other words $\xi=Qx$ for some $x\in\zz^n$.   There are
two such cosets for $Q_a$: if $a$ is odd they are represented by $(1,-2,0)$ and $(1,0,-2)$, both with square $\xi^TQ_a^{-1}\xi=x^TQ_ax=-2$, and if $a$ is even they are represented by $(0,0,0)$ with square 0 and $(2,-2,-2)$ with square $-4$.  Thus by Proposition \ref{prop:rho} if $a$ is odd then the spin structures on $Y$ both have $\rho$-invariant $(-2+3)/4=1/4$ and if $a$ is even their $\rho$-invariants are $3/4$ and $-1/4$.  In either case we find that at least one of the signatures of $L$ has absolute value 1.
\end{proof}
The double branched cover of the link $L9a30$ has cyclic second cohomology group of order 30.  The links $L9a2$, $L9a15$, and $L9a17$ all have signature $\sigma=-3$ for both choices of orientation, possibly after reflecting, and $H^2(\Sigma_2(L);\zz)\cong\zz/40\zz$.  Applying Lemma \ref{lem:kohn} we conclude that each has $u(L)=c^*(L)=3$. 

\subsection{The link $L9a10$ using Donaldson's diagonalisation theorem}
We take $L$ to be the link $L9a10$, oriented and reflected so that the signature is $1$.

\begin{figure}[htbp]
\begin{center}
\includegraphics[width=\textwidth]{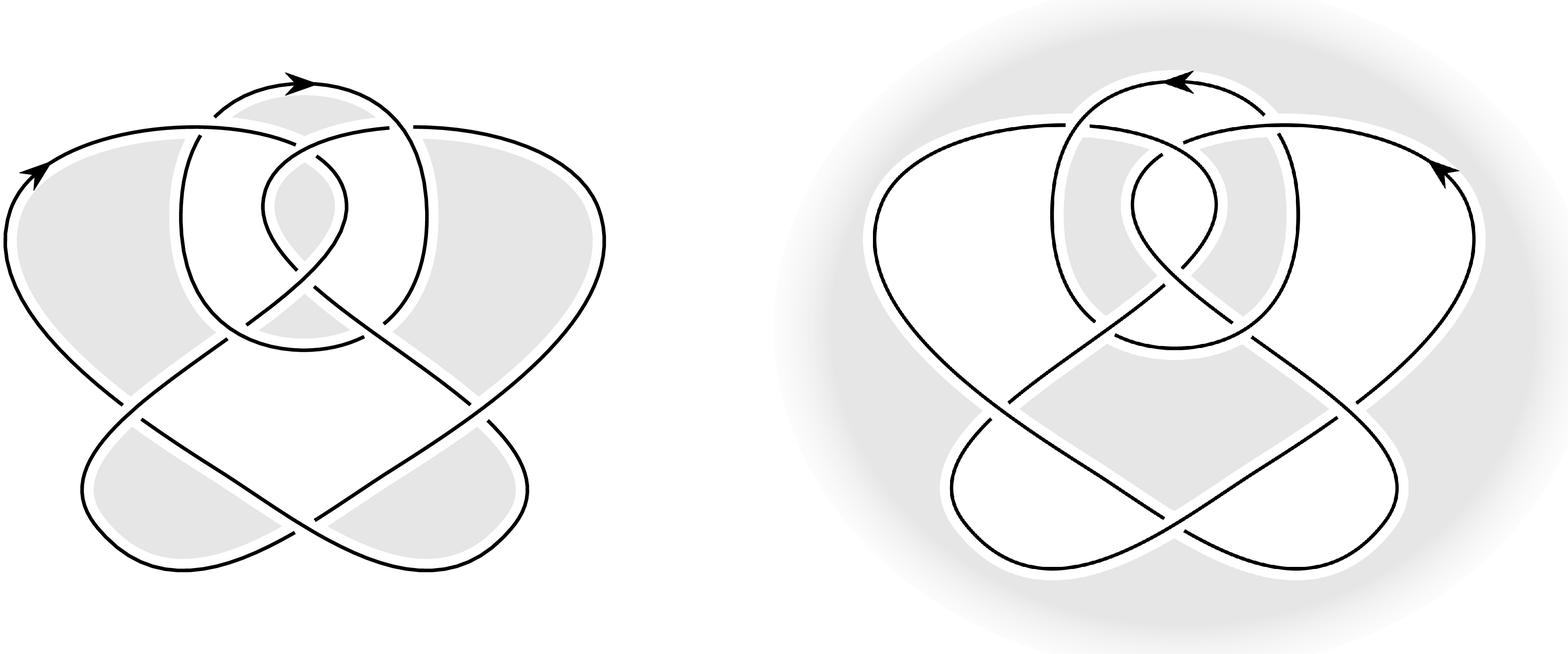}
\caption {The link $L9a10$, with orientation and chessboard colouring.  The link on the left has signature $-1$; its mirror on the right has signature $1$.}
\label{fig:L9a10}
\end{center}
\end{figure}

First consider the link $-L$ as in the first diagram of Figure \ref{fig:L9a10}, which has signature $-1$.  Using the chessboard colouring shown in the figure and following \cite{gl}, the Goeritz matrix is
$$\left(
\begin{matrix} 5&-1&-1\\-1&4&-2\\-1&-2&4 \end{matrix}
\right).$$
Applying \eqref{eq:pineq} to $-L$ leaves two
possibilities: a normally immersed pair of disks in $D^4$ bounded by $-L$ may
have one double point of each sign, or two positive double points.  There are 4 embeddings of the Goeritz lattice $\Lambda$ into the standard integer lattice $\zz^{6}$, 
up to automorphisms of $\Lambda$ and $\zz^6$.  These can be found by hand; they can also be found using the 
\texttt{OrthogonalEmbedings} command in GAP \cite{gap} which implements an algorithm due to Plesken \cite{plesken}.  The complete list is as follows:
$$\begin{aligned}
&\{2e_1+e_2,-e_2+e_3+e_4+e_5,-e_2-e_3-e_4-e_5\},\\
&\{2e_1+e_2,-e_1+e_2+e_3+e_4,-e_2-e_3+e_5+e_6\},\\
&\{e_1+e_2+e_3+e_4+e_5,e_1-e_2-e_3+e_6,-e_1+e_2-e_3-e_6\},\\
&\{e_1+e_2+e_3+e_4+e_5,e_1-e_2-e_3+e_6,-e_1+e_4-e_5-e_6\}.\\
\end{aligned}$$
None of these has a pair of orthogonal square two vectors in the orthogonal complement.  Corollary \ref{cor:alt} then implies we do not have $p=n=1$.  Thus if $-L$ bounds a pair of immersed disks in the 4-ball with two double points then they must both be positive.
  
We now consider $L$, as in the second diagram of Figure \ref{fig:L9a10}, with signature $1$.  Reflecting, we see from the preceding paragraph that if $L$ bounds a pair of immersed disks in the 4-ball with two double points then both are negative, so that we again have equality in \eqref{eq:pineq}.  The Goeritz matrix is
$$\left(
\begin{matrix} 2&-1&0&0&-1&0\\-1&2&-1&0&0&0\\0&-1&3&-1&0&0\\
0&0&-1&3&-1&-1\\-1&0&0&-1&3&0\\0&0&0&-1&0&2 \end{matrix}
\right)$$
and the complete list of embeddings in $\zz^{9}$ is
$$\begin{aligned}
&\{e_1+e_2,-e_2+e_3,-e_3+e_4+e_5,-e_5+e_6+e_7,-e_1-e_4+e_5,-e_7+e_8\},\\
&\{e_1+e_2,-e_2+e_3,-e_3+e_4+e_5,-e_5+e_6+e_7,-e_1-e_7+e_8,-e_4+e_5\},\\
&\{e_1+e_2,-e_2+e_3,-e_3+e_4+e_5,-e_5+e_6+e_7,-e_1-e_7+e_8,-e_6+e_9\},\\
&\{e_1+e_2,-e_2+e_3,-e_3+e_4+e_5,-e_5+e_6+e_7,-e_1-e_7+e_8,-e_7-e_8\},\\
&\{e_1+e_2,-e_2+e_3,-e_1+e_2+e_4,-e_4+e_5+e_6,-e_2-e_3+e_4,-e_5+e_7\}.\\
\end{aligned}$$
Each of the first four do not admit a pair of orthogonal square two vectors in the orthogonal complement.  The last embedding is immediately seen to have image in $\zz^7$, with orthogonal complement in $\zz^9$ isomorphic to
$$\langle3\rangle\oplus\zz^2,$$
which does contain a pair of orthogonal square two vectors.  However these span a finite-index sublattice of $\zz^2$ which is not primitive.  We conclude from Corollary \ref{cor:alt} that $L$ does not bound a pair of immersed disks in $D^4$ with two double points, and that $u(L)=3$.

\subsection{The link $L9a10$ using Heegaard Floer correction terms}
The determinant of $L=L9a10$ is 48, and the double branched cover has cyclic homology group.  As in the argument above using Donaldson's theorem, we apply Theorem \ref{thm:mainthm} to both $-L$, which has signature $-1$ for either quasi-orientation, and to $L$ which has both signatures equal to $1$.
We begin by arguing as in the proof of Lemma \ref{lem:kohn}.   We need to obstruct both $Y=\Sigma(L)$ and $-Y$ from bounding a 4-manifold $W$ with intersection form represented by the matrix
$$Q_a=\begin{pmatrix}-a & 1 & 1\\ 1 & -2 & 0\\ 1 & 0 & -2
\end{pmatrix}.$$
This has determinant $4a-4$.  If it is bounded by $\pm Y$ then $4a-4$ divides $48$ with quotient a square from which it follows that $a$ is either 4 or 13.  The $\rho$-invariants of the spin structures of $\pm Y$ are both $\mp1/4$ by Lemma \ref{lem:dbc}.  This obstructs $-Y$ from bounding $Q_4$ and it obstructs $Y$ from bounding $Q_{13}$,  as in the proof of Lemma \ref{lem:kohn}: if a rational homology sphere $M$ with cyclic homology group bounds $Q_{4}$ then at least one of the spin structures of $M$ has $\rho$-invariant in $\{-1/4,3/4\}$, and if $M$ bounds $Q_{13}$ then at least one of its spin structures has $\rho$-invariant $1/4$.

We complete the proof by  using $\rho$-invariants to show that $-Y$ cannot bound $Q_{13}$, and by using $d$-invariants to show that $Y$ cannot bound $Q_4$.  To do this we need to compute the $d$-invariants of $Y$, which also give the $\rho$-invariants by reducing modulo 2.
The entire set of correction terms of the double branched cover of a nonsplit alternating link can be obtained, using a computer, from the Goeritz matrix of the link as in \cite{osu1}.  We briefly recall how to do this.

Given a negative-definite symmetric integer matrix $G=(g_{ij})$, we  partition the set of characteristic covectors $\xi=(\xi_1,\dots,\xi_n)$ with $g_{ii}\le \xi_i<-g_{ii}$ into cosets of $\Char(G)/2G\zz^n$, and use the Smith normal form of $G$ to record the affine group structure of $\Char(G)/2G\zz^n$.  We also maximise the quantity 
$(\xi^TQ^{-1}\xi+n)/4$ on each of these finite sets of coset representatives; this gives a function 
$$m_G:\Char(G)/2G\zz^n\to\qq.$$
Proposition \ref{prop:d} can then be restated with the left hand side of inequality \eqref{eq:dineq}  replaced by $m_G([\xi])$.  By \cite[Proposition 3.2]{osu1}, if $G$ is the negative-definite Goeritz matrix of a nonsplit alternating diagram of $L$, then in fact $m_G$ computes the correction terms of $\Sigma_2(L)$, or in other words, there is an \emph{isomorphism} $\phi$ as in Proposition \ref{prop:d} with equality in \eqref{eq:dineq}.

In our case we 
find the correction terms of $Y$ are, in cyclic order starting at a spin structure,
$$\left\{
\arraycolsep=1.4pt\def\arraystretch{2.2}
\begin{array}{rrrrrrrrrrrr}
-\dfrac14,&\dfrac{17}{48},&\dfrac{1}{6},&-\dfrac{13}{16},&-\dfrac{7}{12},&-\dfrac{55}{48},&-\dfrac{1}{2},&-\dfrac{31}{48},&\dfrac{5}{12},&\dfrac{11}{16},&\dfrac{1}{6},&-\dfrac{55}{48},\\
%&&&&&&&&&&&\\
-\dfrac{5}{4},&-\dfrac{7}{48},&\dfrac{1}{6},&-\dfrac{5}{16},
&\dfrac{5}{12},&\dfrac{17}{48},&-\dfrac{1}{2},&-\dfrac{7}{48},&-\dfrac{7}{12},&\dfrac{3}{16},&\dfrac{1}{6},&-\dfrac{31}{48},\\
%&&&&&&&&&&&\\
-\dfrac{1}{4},&-\dfrac{31}{48},&\dfrac{1}{6},&\dfrac{3}{16},&-\dfrac{7}{12},&-\dfrac{7}{48},&-\dfrac{1}{2},&\dfrac{17}{48},&\dfrac{5}{12},&-\dfrac{5}{16},
&\dfrac{1}{6},&-\dfrac{7}{48},\\
%&&&&&&&&&&&\\
-\dfrac{5}{4},&-\dfrac{55}{48},&\dfrac{1}{6},&\dfrac{11}{16},&\dfrac{5}{12},&-\dfrac{31}{48},&-\dfrac{1}{2},&-\dfrac{55}{48},&-\dfrac{7}{12},&-\dfrac{13}{16},&\dfrac{1}{6},&\dfrac{17}{48}
\end{array}
\right\}.$$

If we compute the full $\rho$-invariant of $Q=Q_{13}$, for example by computing $m_Q$ as above (though in fact to compute the $\rho$-invariant one just needs an arbitrary representative from each coset of $\Char(Q)/2Q\zz^n$), we find that for $\xi=(3,2,0)$ we have $m_Q([\xi])=-1/12$.  Since the reduction modulo 2 of the $d$-invariants gives the $\rho$-invariants of $Y$, we see that $1/12$ is not the $\rho$-invariant of any \spinc structure on $Y$.  By Proposition \ref{prop:rho} we conclude that $-Y$ does not bound $Q_{13}$.

Now consider the form $Q=Q_4$.  Computing as above we find the values of $m_Q$ are, in cyclic order starting at a fixed point of the involution on $\Char(Q)/2Q\zz^3$,

$$\left\{
\arraycolsep=1.4pt\def\arraystretch{2.2}
\begin{array}{rrrrrrrrrrrr}
&-\dfrac{1}{4},&\dfrac{1}{6},&-\dfrac{7}{12},&-\dfrac{1}{2},&\dfrac{5}{12},&\dfrac{1}{6},\dfrac34,&\dfrac{1}{6},&\dfrac{5}{12},&-\dfrac{1}{2},&-\dfrac{7}{12},&\dfrac{1}{6}
\end{array}
\right\}.$$
There is a unique affine monomorphism $\phi$ from $\Char(Q)/2Q\zz^3$ to $\spinc(Y)/(\zz/2\zz)$ satisfying the conditions of Proposition \ref{prop:rho}.  Labelling the second list above from $0$ to $11$ and the list of $d$-invariants from $0$ to $47$, the only possibility is
$$\phi:i\mapsto \{2i,2i+24\pmod{48}\}.$$
This however (just!) fails to satisfy \eqref{eq:dineq} since $3/4>-5/4$.

We conclude that in fact $Y$ does not bound $Q_4$, and that $u(L)=c^*(L)=3$.

%%%%%%%%%%%%%%%%%%%%%%%%%%%%%%%%%%%%%%%%%%%%%%%%%%%%%%%%%%%%%%%%%%%%%%%%%%

\section{Unlinking numbers of prime links with 9 crossings or less}
Table \ref{table:u} contains the unlinking numbers and 4-ball crossing numbers of all prime nonsplit links with crossing number 9 or less and with more than one component. Unknotting numbers and 4-ball crossing numbers of prime knots with 9 or fewer crossings are available from \cite{knotinfo}.

With the exception of the links in Examples \ref{eg:kohn} and \ref{eg:L9a10}, the unlinking numbers were known previously due to work of Kauffman-Taylor, Kohn, Kawauchi, and Borodzik-Friedl-Powell \cite{kt, kohn, kaw14,bfp}, and in many cases these were also known to be equal to the 4-ball crossing number.

For all links in the table, the unlinking number is realised by changing a subset of the crossings in the minimal diagram given in \cite{knotatlas}.  That the minimal number of crossings in each case is equal to the 4-ball crossing number follows from applying one or more of the results in this paper.

The required lower bound is provided by Lemma \ref{lem:det} for the links $L5a1$, $L6a4$, $L7a1$, $L7a3$, $L7a4$, $L7a6$, $L8a1$, $L8a8$, $L8a9$, $L8a16$, and $L9aN$ for 
$$N\in \{1,3,4,8,9,18,20,21,22,25,26,27,35,38,40,42\}.$$
The bound comes from Lemma \ref{lem:ineq} for $L9a14$, $L9a29$ and $L9a36$, and from Lemma \ref{lem:kohn} for $L9a2$, $L9a15$, $L9a17$, and $L9a30$.  The bound for $L9a10$ is obtained in Section \ref{sec:eg}.  For all remaining links in the table the required bound may be obtained using Lemma \ref{lem:sublinks}.

The fact that the invariants $c^*(L)$ and $u(L)$ coincide in this table is due to the small crossing number restriction.  There are nontrivial nonsplit prime slice links with 
10-crossing diagrams (for example $L10n32$ and $L10n36$ in \cite{linkinfo,knotatlas}) for which these invariants will clearly differ.

\begin{table}[ht]
\begin{center}
\small
\begin{tabular}[t]{lc}
\toprule
Link $L$ & $u(L)$\\
\midrule
$L2a1$	&1\\ %Kohn % for c* use Lemma 2.5 i.e. \ref{lem:sublinks} (sublinks for short)
$L4a1$	&2\\  %Kohn % sublinks
$L5a1$	&1\\ %Kohn % for c^* use Lemma 2.4 i.e \ref{lem:det} (det for short)
$L6a1$     &2\\ %Kohn % sublinks
$L6a2$	&3\\ %Kohn % sublinks
$L6a3$	&3\\ %Kohn % sublinks
$L6a4$	&2\\ %Kawauchi % det
$L6a5$	&3\\ %linking numbers % sublinks
$L6n1$	&3\\ %linking numbers % sublinks
$L7a1$	&2\\ %Kohn % det
$L7a2$	&3\\ %Kohn % sublinks
$L7a3$	&2\\ %Kohn % det
$L7a4$	&2\\ %Kohn % det
$L7a5$	&1\\ %Kohn % sublinks
$L7a6$	&2\\ %Kohn % det
$L7a7$	&3\\ %linking numbers % sublinks
$L7n1$	&3\\ %Kohn % sublinks
$L7n2$	&1\\ %Kohn % sublinks
$L8a1$	&2\\ %Kohn % det
$L8a2$	&1\\ %Kohn % sublinks
$L8a3$	&3\\ %Kohn % sublinks
$L8a4$	&1\\ %Kohn % sublinks
$L8a5$	&3\\ %Kohn % sublinks
$L8a6$	&2\\ %Kohn % sublinks
$L8a7$	&3\\ %Kohn % sublinks
$L8a8$	&2\\ %Kohn % det
$L8a9$	&2\\ %Kohn % det
$L8a10$	&3\\ %Kohn % sublinks
$L8a11$	&3\\ %Kohn % sublinks
$L8a12$	&4\\ %Kohn % sublinks
$L8a13$	&4\\ %Kohn % sublinks
$L8a14$	&4\\ %Kohn % sublinks
$L8a15$	&3\\ %linking numbers % sublinks
\bottomrule
\end{tabular}%
\qquad
\begin{tabular}[t]{lcc}
\toprule
Link $L$ & $u(L)$\\
\midrule
$L8a16$	&3\\ %Kawauchi (or Borodzik-Friedl-Powell) % det
$L8a17$	&4\\ %linking numbers % sublinks
$L8a18$	&4\\ %linking numbers % sublinks
$L8a19$	&2\\ %linking numbers % sublinks
$L8a20$	&4\\ %linking numbers % sublinks
$L8a21$	&4\\ %linking numbers % sublinks
$L8n1$	&3\\ %Kohn % sublinks
$L8n2$	&1\\ %Kohn % sublinks
$L8n3$	&4\\ %linking numbers % sublinks
$L8n4$	&4\\ %linking numbers % sublinks
$L8n5$	&2\\ %linking numbers % sublinks
$L8n6$	&4\\ %linking numbers % sublinks
$L8n7$	&4\\ %linking numbers % sublinks
$L8n8$	&4\\ %linking numbers % sublinks
$L9a1$	&2\\ %Kohn % det
$L9a2$	&3\\ %new
$L9a3$	&2\\ %Kohn % det
$L9a4$	&2\\ %Kohn % det
$L9a5$	&3\\ %Kohn % sublinks
$L9a6$	&4\\ %Kohn % sublinks
$L9a7$	&3\\ %Kohn % sublinks
$L9a8$	&2\\ %Kohn % det
$L9a9$	&2\\ %Kohn % det
$L9a10$	&3\\ %new
$L9a11$	&3\\ %Kohn % sublinks
$L9a12$	&4\\ %Kohn % sublinks
$L9a13$   &3\\ %Kohn % sublinks
$L9a14$	&3\\ %Kohn % for c* use Lemma 2.2 i.e. \ref{lem:ineq} (ineq for short)
$L9a15$	&3\\ %new
$L9a16$	&3\\ %Kohn % sublinks
$L9a17$	&3\\ %new
$L9a18$	&2\\ %Kohn % det
$L9a19$	&2\\ %Kohn % sublinks
\bottomrule
\end{tabular}%
\qquad
\begin{tabular}[t]{lcc}
\toprule
Link $L$ & $u(L)$\\
\midrule
$L9a20$	&2\\ %Kohn % det
$L9a21$	&1\\ %Kohn % det
$L9a22$	&2\\ %Kohn % det
$L9a23$	&4\\ %Kohn % sublinks
$L9a24$	&2\\ %Kohn % sublinks
$L9a25$	&2\\ %Kohn % det
$L9a26$	&2\\ %Kohn % det
$L9a27$	&1\\ %Kohn % det
$L9a28$	&4\\ %Kohn % sublinks
$L9a29$	&3\\ %Kohn % ineq
$L9a30$	&3\\ %new
$L9a31$	&2\\ %Kohn % sublinks
$L9a32$	&4\\ %Kohn % sublinks
$L9a33$	&3\\ %Kohn % sublinks
$L9a34$	&2\\ %Kohn % sublinks
$L9a35$	&2\\ %Kohn % det
$L9a36$	&3\\ %Kohn % ineq
$L9a37$	&2\\ %Kohn % sublinks
$L9a38$	&1\\ %Kohn % det
$L9a39$	&2\\ %Kohn % sublinks
$L9a40$	&2\\ %Kohn % det
$L9a41$	&2\\ %Kohn % sublinks
$L9a42$	&2\\ %Kohn % det
$L9a43$	&4\\ %linking & unknotting numbers % sublinks
$L9a44$	&4\\ %linking & unknotting numbers % sublinks
$L9a45$	&3\\ %linking numbers % sublinks
$L9a46$	&2\\ %Borodzik-Friedl-Powell % det
$L9a47$   &3\\ %linking numbers & Whitehead sublink % sublinks
$L9a48$	&4\\ %linking numbers % sublinks
$L9a49$	&4\\ %linking numbers % sublinks
$L9a50$	&3\\ %linking numbers & Whitehead sublink % sublinks
$L9a51$	&4\\ %linking numbers % sublinks
$L9a52$	&3\\ %linking numbers & Whitehead sublink % sublinks
\bottomrule
\end{tabular}%
\qquad
\begin{tabular}[t]{lcc}
\toprule
Link $L$ & $u(L)$\\
\midrule
$L9a53$	&2\\ %Kawauchi % det
$L9a54$	&3\\ %Kawauchi % det
$L9a55$	&4\\ %linking numbers % sublinks
$L9n1$	&3\\ %Kohn % sublinks
$L9n2$	&2\\ %Kohn % det
$L9n3$	&1\\ %Kohn % det
$L9n4$	&4\\ %Kohn % sublinks
$L9n5$	&2\\ %Kohn % sublinks
$L9n6$	&2\\ %Kohn % sublinks
$L9n7$	&3\\ %Kohn % sublinks
$L9n8$	&2\\ %Kohn % det
$L9n9$	&3\\ %Kohn % sublinks
$L9n10$	&2\\ %Kohn % sublinks
$L9n11$	&2\\ %Kohn % sublinks
$L9n12$	&4\\ %Kohn % sublinks
$L9n13$	&1\\ %Kohn % det
$L9n14$	&2\\ %Kohn % sublinks
$L9n15$	&4\\ %Kohn % sublinks
$L9n16$	&4\\ %Kohn % sublinks
$L9n17$	&2\\ %Kohn % sublinks
$L9n18$	&4\\ %Kohn % sublinks
$L9n19$	&4\\ %Kohn % sublinks
$L9n20$	&4\\ %linking & unknotting numbers % sublinks
$L9n21$	&4\\ %linking & unknotting numbers % sublinks
$L9n22$	&4\\ %linking & unknotting numbers % sublinks
$L9n23$	&3\\ %linking numbers & Whitehead sublink % sublinks
$L9n24$	&3\\ %linking numbers & Whitehead sublink % sublinks
$L9n25$	&2\\ % % det
$L9n26$	&3\\ %linking numbers & Whitehead sublink % sublinks
$L9n27$	&1\\ %by inspection of minimal diagram one works; not unlink since e.g. has Whitehead sublinks % sublinks or det (nullity is one)
$L9n28$	&3\\%linking numbers & Whitehead sublink % sublinks
&\\
&\\
\bottomrule
\end{tabular}
\end{center}
\medskip

\caption{Unlinking numbers and 4-ball crossing numbers of prime nonsplit links with 9 or fewer crossings.}
\label{table:u}
\end{table}

%%%%%%%%%%%%%%%%%%%%%%%%%%%%%%%%%%%%%%%%%%%%%%%%%%%%%%%%%%%%%%%%%%%%%%%%%%

\clearpage
\bibliographystyle{amsplain}
\bibliography{unlinking}
\end{document}